\def\blue#1{{\bf \color{blue} #1}}
\theoremstyle{definition}
\newtheorem{Theorem}{Theorem}[section]
\newtheorem{Lemma}[Theorem]{Lemma}
\newtheorem{Remark}[Theorem]{Remark}
\newtheorem{Definition}[Theorem]{Definition}
\newtheorem{Corollary}[Theorem]{Corollary}
\newtheorem{Proposition}[Theorem]{Proposition}
\newcommand{\C}{\mathbb{C}}
\newcommand{\R}{\mathbb{R}}
\newcommand{\Z}{\mathbb{Z}}
\newcommand{\UU}{\mathbb{U}}
\newcommand{\PP}{\mathbb{P}}
\newcommand{\OO}{\mathbb{O}}
\newcommand{\DD}{\mathbb{D}}
\newcommand{\II}{\mathbb{I}}
\newcommand{\wil}[1]{\widetilde{#1}}
\newcommand{\mc}[1]{\mathcal{#1}} % short for mathcal
\newcommand{\mf}[1]{\mathfrak{#1}} % short for mathfrak
\newcommand{\mt}[1]{\text{#1}}
\newcommand{\ii}{\textbf{i}}
\newcommand{\jj}{\textbf{j}}
\newcommand{\kk}{\textbf{k}}
\begin{document}

\title{Complex $\mt{G}_2$ and Associative Grassmannian}

\author{Selman Akbulut and Mahir Bilen Can}

%\date{November 26, 2017}

\maketitle

\begin{abstract}
We obtain defining equations of the smooth equivariant compactification of the Grassmannian of 
the complex associative $3$-planes in $\C^7$, which is the 
parametrizing variety of all quaternionic subalgebras of the 
algebra of complex octonions $\OO\cong \C^8$. 
By studying the torus fixed points, we compute the Poincar\'e polynomial 
of the compactification. 
%We contrast our results with the wonderful compactification.

\vspace{.2cm}
\noindent
\textit{Keywords:} Associative grassmannian, octonions, quaternions.
\noindent 
%\textit{MSC:}{} 

\end{abstract}

\section{Introduction}

The exceptional Lie group $\mt{G}_2$, 
similar to any other Lie group, has different guises 
depending on the underlying field; 
it has two real forms and a complex form.
All of these incarnations have descriptions 
as a stabilizer group. We denote these three forms by $\mt{G}_{2}^+$,
$\mt{G}_{2}^-$, and by $\mt{G}_2$, respectively, 
where first two are real forms. 
The first of these is compact, connected, simple, 
simply connected, of (real) dimension 14, 
the second group is non-compact, connected, 
simple, of (real) dimension 14. 
The complex form of $\mt{G}_2$ is non-compact, 
connected, simple, simply connected, of (complex) dimension 14. 

\vspace{.05in}

Let $V$ denote either $\R^7$ or $\C^7$, and $e_1,\dots, e_7$ be its standard basis, and  
$x_1=e_1^*,\dots,x_7=e_7^*$ denote the dual basis. 
We write $\mt{GL}_7(\R)$ or $\mt{GL}_7(\C)$ instead of $\mt{GL}(V)$ when there is no danger of confusion.
If the underlying field does not play a role, then we write simply $\mt{GL}_7$.
Consider the fourth fundamental representation $\bigwedge^4 V$ of $\mt{GL}_7$, which is irreducible. 
Note that $\bigwedge^3 V^*$ is naturally isomorphic, as a $\mt{GL}_7$ 
representation, to $\bigwedge^4 V$, where $V^*$ denotes the dual space. 
On the other hand, $\bigwedge^4 V$ and $\bigwedge^3 V$ are dual representations, hence 
$\bigwedge^4 V \simeq (\bigwedge^3 V)^* \simeq \bigwedge^3 V^*$. 
Assuming that $i,j$ and $k$ are distinct numbers from $\{1,\dots,7\}$ let us use the shorthand $e^{ijk}$ 
to denote the wedge product $x_i \wedge x_j \wedge x_k$.
Following~\cite{Bryant87}, we set: 
$$
\phi^+:= e^{123} + e^{145} + e^{167}+e^{246} -e^{257} - e^{347} - e^{356},
$$
$$
{\phi}^-:= e^{123} - e^{145} - e^{167}-e^{246}  + e^{257} + e^{347} + e^{356}.
$$
It turns out that $\mt{G}_{2}^-$ is the stabilizer of $\phi^-$ in $\mt{GL}_7(\R)$, 
$\mt{G}_{2}^+$ is the stabilizer of $\phi^+$ in $\mt{GL}_7(\R)$, and finally, $\mt{G}_{2}$ is the stabilizer of $\phi:=\phi^+$ in $\mt{GL}_7(\C)$.
In fact, the orbit $\mt{GL}_7\cdot \phi$ is Zariski open in $\bigwedge^3 V^*$; over real numbers this orbit splits into two with stabilizers 
$\mt{G}_{2}^+$ and $\mt{G}_{2}^-$. A pleasant consequence of this openness is that $\mt{GL}_7(\C)$ has only finitely many orbits in $\bigwedge^3 V^*$. 

\vspace{.1in}

Let $\OO_k$ denote the octonion algebra over a field $k$. 
To ease our notation, when $k=\C$ we write $\OO$.
Following~\cite{SV}, we view $\OO_k$ as an 8-dimensional 
composition algebra; it is non-associative, unital (with unity $e\in \OO_k$), 
and it is endowed with a norm $N: \OO_k \rightarrow k$ such that 
$N(xy) = N(x) N(y) \ \text{ for all } x,y \in \OO_k$.

Composition algebras exist only in dimensions 
1,2,4 and 8. Moreover, they are uniquely determined 
(up to isotopy) by their quadratic form. 
When $k=\C$, there is a unique isomorphism 
class of quadratic forms and any member of this class is {\em isotropic}, 
that is to say the norm of the composition algebra vanishes on a nonzero element.
When $k=\R$ there are essentially two isomorphism classes of quadratic forms, 
first of which gives isotropic composition algebras, 
and the second class gives composition algebras with positive-definite quadratic forms.

\vspace{.1in}

For any real octonion algebra $\OO_\R$,the tensor product 
$\C \otimes_{\R} \OO_\R$ is isomorphic to $\OO$ 
(by the uniqueness of the octonion algebra over $\C$).
In literature $\C \otimes_{\R} \OO_\R=\OO$ is known as the 
``complex bioctonion algebra''.
We denote by $G_\R$ the group of algebra automorphisms 
of $\OO_\R$, and denote by $G$ the 
group of algebra automorphisms of $\OO$. 
By Proposition 2.4.6 of~\cite{SV} we know that the group of 
$\R$-rational points of $G$ is equal to $G_\R$. 
Of course(!), $G_\R$ is either $\mt{G}_2^+$ or $\mt{G}_2^-$ 
depending on which $\OO_k$ we start with. 
Meanwhile, $G$ is equal to $\mt{G}_2$. See Theorem 2.3.3,~\cite{SV}.

\vspace{.1in}

Let $N_1$ denote the restriction of the norm $N$ to $e_0^\perp$, the (7 dimensional) orthogonal 
complement of the identity vector $e_0\in \OO_k$. For notational ease we are going to denote $e_0^\perp$ in $\OO_k$
by $\II_k$ and denote $e_0^\perp$ in $\OO$ simply by $\II$.
The automorphism groups $G_\R$ and $G$ preserve the norms on their respective octonion algebras, and obviously 
any automorphism maps identity to identity. Thus, we know that $G_\R$ is contained 
in $SO(\II_k)$ and $G$ is contained in $SO(\II)$. Here, $SO(W)$ denotes the group of orthogonal transformations
of determinant 1 on a vector space $W$. If there is no danger of confusion, we write $SO_n$ ($n=\dim W$) in place of $SO(W)$.

\vspace{.1in}

A {\em quaternion algebra}, $\DD_k$ over a field $k$ is a 
4 dimensional composition algebra over $k$. 
As it is mentioned earlier, there are essentially (up to isomorphism) 
two quaternion algebras over $k=\R$ and there is a unique 
quaternion algebra over $k=\C$, which we denote simply by $\DD$, 
and call it the split quaternion algebra. (More generally, 
any composition algebra over $\C$ is called split.)
Any quaternion algebra over $\C$ is isomorphic 
to the algebra of $2\times 2$ matrices over $\C$ 
with determinant as its norm. 

\vspace{.1in}

The split octonion algebra $\OO$ has a description which is built on $\DD$ by the {\em Cayley-Dickson doubling process}:
As a vector space, $\OO$ is equal to $\DD\oplus \DD$ and its multiplicative structure is  
\begin{align}\label{A:octonion multiplication}
(a , b)(c , d) &= (ac + \bar d b,   da  + b\bar c),\ \text{ where } a,b,c,d\in \DD, 
\end{align}
and its norm is defined by $N((a,b))= N(a) - N(b)=\det a - \det b$.

\vspace{.1in}

Let $\mt{Gr}_k(3,\II_k)$ denote the grassmannian 
of 3 dimensional subspaces in $\II_k$.
Let $\DD_k\subset \OO_k$ be the quaternion 
subalgebra generated by the first four generators 
$e_1=e,e_2,e_3,e_4$ of $\OO_k$.
As usual, if $k=\C$, then we set $\DD = \DD_k$. 
Let us denote by $W_0$ the intersection $\DD_k \cap \II_k$, 
the span of $e_2,e_3,e_4$ in $\II_k$. 
By Corollary 2.2.4~\cite{SV}, when $k=\C$, 
we know that $\mt{G}_2$ acts transitively on the 
set of all quaternion subalgebras of $\OO$. 
Since an algebra automorphism fixes the identity, 
under this action, $W_0$ is mapped to another 
3-plane of the form $W'=D\cap \II$,
for some other split quaternion subalgebra $D'\subset \OO$. 
Thus, the $\mt{G}_2$-action on $\mt{Gr}(3,\II)$ 
has at least two orbits, one of which is $\mt{G}_2\cdot W_0$ 
and there is at least one other orbit of the form
$\mt{G}_2\cdot W$ for some 3-plane $W$ in $\II$.
The goal of our paper is to obtain an understanding 
of the geometry of the Zariski closure of the orbit 
$\mt{G}_2 \cdot W_0$,  which is the complexified 
version of  the real associative Grassmannian 
$\mt{G}_2^+/\mt{SO}_4(\R)$, where the  deformation 
theory of \cite{AS}  takes place. We achieve our goal 
by using techniques from calibrated geometries.

\vspace{.1in}

After we obtained some of our main results we learned from Michel Brion about the work of 
Alessandro Ruzzi~\cite{Ruzzi10,Ruzzi11} on the classification of symmetric varieties of Picard number 1. 
Our work fits nicely with this classification scheme, so we will briefly mention the relevant results of Ruzzi.

\vspace{.1in}

Let $G$ be a connected semisimple group defined over $\C$, 
$\theta$ be an involutory automorphism of $G$. We denote by
$G^\theta$ the fixed locus of $\theta$. Let $H$ be any subgroup 
that is squeezed between $(G^\theta)^0$ and the normalizer subgroup $N_G(G^\theta)$. 
Here, the superscript 0 indicates the connected component of the identity element.
The quotient varieties of the form $G/H$ are called symmetric varieties. 
In his 2011 paper, Ruzzi classified all symmetric varieties of Picard number 1 
and in ~\cite[Theorem 2a)]{Ruzzi10}, he showed that the smooth equivariant 
completion with Picard number 1 of the symmetric variety 
$\mt{G}_2/ \mt{SL}_2 \times \mt{SL}_2$ is the intersection of the grassmannian 
$\mt{Gr}(3,\II)$ with a 27 dimensional $\mt{G}_2$-stable linear space in $\PP(\bigwedge^3 \II)$. 
We will denote this equivariant completion by $X_{min}$ and 
call it the (complex) {\em associative grassmannian}.
Note that over $\C$, $\mt{SL}_2\times \mt{SL}_2$ is identified with the special 
orthogonal group $\mt{SO}_4$.
Although Ruzzi has first showed that $X_{min}$ is the unique smooth equivariant completion of 
$\mt{G}_2/ \mt{SL}_2 \times \mt{SL}_2$ with Picard number 1, the question of finding 
its defining ideal as well as the computation of its Poincar\'e polynomial remained 
unanswered. In a sense our article finishes this program. 
More precisely, we prove the following results: 

%\newpage

\begin{Theorem}\label{T:main1:intro}
As a subvariety of $\mt{Gr}(3,\II)$, the compactification $X_{min}$ 
of $\mt{G}_2/\mt{SO}_4$ is defined by the vanishing of the following seven 
linear forms in the Pl\"ucker coordinates of $\mt{Gr}(3,\II)$:
\begin{enumerate}
\item $p_{247} - p_{256} - p_{346} - p_{357}$,
\item $p_{156}  - p_{147} + p_{345} - p_{367}$,
\item $-p_{245} + p_{267} + p_{146} + p_{157}$, 
\item $p_{567} + p_{127} - p_{136} + p_{235}$, 
\item $-p_{126} - p_{467} - p_{137} - p_{234}$, 
\item $p_{457} + p_{125} + p_{134} - p_{237}$,
\item $p_{135} - p_{124} - p_{456} + p_{236}$.
\end{enumerate}
\end{Theorem}

\begin{Theorem}\label{T:main2:intro}
The Poincar\'e polynomial of $X_{min}$ is
$$
P_{X_{min}}(t^{1/2}) = 1+ t + 2t^2 + 2t^3 + 3t^4 + 2t^5 + 2 t^6+ t^7 + t^8.
$$
\end{Theorem}

To prove these results we analyze the natural action of the maximal torus of $\mt{G}_2$ on $X_{min}$. 
In particular, we determine the fixed points of the torus action and compute the Poincar\'e polynomial of 
$X_{min}$ by using the Bia{\l}ynicki-Birula decomposition. 

Let us emphasize once more that none of our results rely on Ruzzi's work but 
we use the techniques from calibrated geometries. In fact, over the field of real numbers, 
the analogous symmetric variety $\text{ASS}:=\mt{G}_2^+/\mt{SO}_4(\R)$ is already compact, and its geometry 
is well understood; its defining equations are also given by certain linear equations arising from 
the calibration form. In this article, we essentially lifted these observations to the complex setting. 
Finally, let us mention that the complete description of the ring structure of the $H^*(\text{ASS},\Z)$ is described in \cite{AK}.

\vspace{1cm}

\textbf{Acknowledgement.}  We thank Michel Brion for his comments on an earlier version of our paper and for bringing to our attention the work of Ruzzi. 
We thank \"Ust\"un Y\i ld\i r\i m and \"Ozlem U\u{g}urlu for their comments and help.
Finally, we thank to the anonymous referee for her/his careful reading of our paper and for 
her/his comments which improved the quality of our paper in a significant way.

\section{Grassmann of 3-planes}\label{S:grassmannian}

We call  a 3-plane $W \in \mt{Gr}(3,\II_k)$ associative if $W = D \cap \II_k$, 
where $D$ is a quaternion subalgebra of $\OO_k$.
For a subset $S\subset \OO_k$, we denote by $A(S)$ 
the subalgebra of $\OO_k$ that is generated by $S$. 
Let $W\in \mt{Gr}_k(3,\II_k)$ be a 3-plane and let 
$u_1,u_2,u_3\in W$ be a basis. Thus, the vector space dimension 
of $A(W)$ is either $\dim A(W) = 4$ or $\dim A(W)=8$.
In latter case, obviously, $A(W)=\OO_k$.
In the former case, $A(W)$ is an associative subalgebra 
(as follows from Proposition 1.5.2 of~\cite{SV})
but it does not need to be a composition subalgebra. 
Our orbit $\mt{G}_2 \cdot W_0$ in $\mt{Gr}_k(3,\II_k)$
contains the set of 3-planes $W$ such that $A(W)$ is a 
4 dimensional composition subalgebra, 
which we state in our next lemma:

\begin{Lemma}\label{L:fixed quaternions}
If $W\subset \II$ is a 3-plane that is in the 
$\mt{G}_2$-orbit of $W_0$, then $A(W)$ is 
a quaternion subalgebra of $\OO$. Moreover, 
the stabilizer subgroup of any such $W$ is 
isomorphic to $\mt{SO}_4$, the special 
orthogonal group of 4 by 4 matrices.
\end{Lemma}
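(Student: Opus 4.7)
For the first assertion, the plan is short: every $g \in \mt{G}_2$ is an algebra automorphism of $\OO$, so the subalgebra-generation operator is equivariant, $A(g\cdot W) = g\cdot A(W)$. From the quaternion relations on $e_2,e_3,e_4$ one checks directly that $A(W_0) = \DD$, so for any $W = g\cdot W_0$ in the orbit we have $A(W) = g\cdot \DD$, a $4$-dimensional composition subalgebra (being the image of such a subalgebra under an isomorphism that preserves the norm), and hence a quaternion subalgebra of $\OO$.

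For the stabilizer statement, stabilizers within a single orbit are conjugate, so it suffices to identify $H := \mt{Stab}_{\mt{G}_2}(W_0)$. The first step is to observe that $H$ coincides with the setwise stabilizer of $\DD$ in $\mt{G}_2$: if $g(W_0)=W_0$ then $g(\DD) = A(g\cdot W_0) = A(W_0) = \DD$, and conversely any $g\in\mt{G}_2$ preserving $\DD$ automatically preserves $\II = e_0^\perp$ (since $g(e_0)=e_0$ and $g$ preserves the norm), hence $\DD\cap\II = W_0$.

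The main construction is then a homomorphism $\Phi\colon \mt{SL}_2(\C)\times \mt{SL}_2(\C)\to H$ with kernel $\{\pm(I,I)\}$. Using the identification $\DD \cong M_2(\C)$ (under which $N=\det$ and $\bar x = x^{-1}$ for $x\in\mt{SL}_2(\C)$), set
\[
\Phi(p,q)(a,b) = \bigl(p\,a\,p^{-1},\; q\,b\,p^{-1}\bigr),\qquad (a,b)\in \DD\oplus\DD = \OO.
\]
Surjectivity will follow from a structural analysis: any $g\in H$ preserves the orthogonal decomposition $\OO=\DD\oplus\DD^\perp$, so $g$ is determined by its restriction $g|_\DD$ (an automorphism of $\DD\cong M_2(\C)$, necessarily inner, i.e.\ $a\mapsto pap^{-1}$ for some $p\in\mt{SL}_2(\C)$ unique up to sign) together with $g(0,1)=(0,\gamma)\in \DD^\perp$ (where $\gamma\in\mt{SL}_2(\C)$ by norm preservation, since $N(0,1)=-1$). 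The octonion identity $(b,0)(0,1) = (0,b)$ then forces $g(0,b)=(0,\gamma\, p b p^{-1})$, and putting $q := \gamma p \in \mt{SL}_2(\C)$ yields $g = \Phi(p,q)$. Combined with the isomorphism $(\mt{SL}_2\times\mt{SL}_2)/\{\pm(I,I)\} \cong \mt{SO}_4$ recorded in the paper's footnote, this will give $H\cong \mt{SO}_4$.

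I expect the main technical obstacle to be the direct verification that $\Phi(p,q)$ is an algebra automorphism of $\OO$, which amounts to checking $\Phi(p,q)\bigl((a,b)(c,d)\bigr) = \Phi(p,q)(a,b)\cdot\Phi(p,q)(c,d)$ using~\eqref{A:octonion multiplication}; this reduces, after bookkeeping, to the identities $\bar p p = \bar q q = I$ (from $\det p = \det q = 1$) and the anti-homomorphism property $\overline{xy}=\bar y\bar x$ of quaternion conjugation in $\DD$. Once this calculation is done, together with the kernel computation and surjectivity argument above, the lemma follows.
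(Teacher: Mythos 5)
Your proposal is correct, and for the first assertion it is essentially the paper's argument made explicit: the paper invokes transitivity of $\mt{G}_2$ on quaternion subalgebras, while you use the equivalent equivariance $A(g\cdot W)=g\cdot A(W)$ together with $A(W_0)=\DD$. For the stabilizer, however, you take a genuinely different route. The paper restricts $g\in \mt{Stab}_{\mt{G}_2}(W_0)$ to the $4$-dimensional space $\DD^\perp$, obtaining an injection $\iota\colon \mt{Stab}_{\mt{G}_2}(W_0)\hookrightarrow SO(\DD^\perp)\simeq \mt{SO}_4$, and then argues surjectivity from the fact that the multiplication table lets one recover the action on $\DD$ from the action on $\DD^\perp$; that last step is really another determinacy (injectivity-type) statement, and the paper leaves implicit why every element of $SO(\DD^\perp)$ actually extends to an automorphism of $\OO$. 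You instead parametrize the stabilizer from the inside, via the explicit homomorphism $\Phi(p,q)(a,b)=(pap^{-1},qbp^{-1})$ with kernel $\{\pm(I,I)\}$, and prove surjectivity onto the stabilizer by Skolem--Noether on $\DD\cong M_2(\C)$ plus the identity $(b,0)(0,1)=(0,b)$; the verification that $\Phi(p,q)$ respects~(\ref{A:octonion multiplication}) does go through exactly as you predict, using $\bar{x}=x^{-1}$ on $\mt{SL}_2$ and $\overline{xy}=\bar{y}\,\bar{x}$. What your approach buys is an honest, constructive proof of surjectivity (closing the gap in the paper's one-line ``It follows that $\iota$ is surjective''), at the cost of relying on the identification $(\mt{SL}_2\times\mt{SL}_2)/\{\pm(I,I)\}\cong\mt{SO}_4$ rather than landing in $SO(\DD^\perp)$ directly; as a bonus, your $\Phi(g,g)$ and $\Phi(I,g)$ are precisely the two $\mt{SL}_2$-actions the paper later studies in Proposition~\ref{P:SL2 action}, so your description of the stabilizer dovetails with the rest of the paper. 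The only point to make explicit when writing this up is the normalization of $p$ to determinant one (possible over $\C$ by rescaling, unique up to sign), which you use tacitly in the kernel and surjectivity steps.
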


\begin{proof}
The subalgebra generated by $W_0$ is the quaternion 
algebra $\DD$.
Since $\mt{G}_2$ acts transitively on the set of 
quaternion subalgebras (Corollary 2.2.4~\cite{SV}), 
the proof of our first assertion follows. 
To prove our second claim it is enough to prove it for $W_0$, 
the ``origin of the orbit'' since the stabilizer subgroups of 
other points are isomorphic to that of $W_0$ by conjugation.

Let $g$ be an element from $\mt{G}_2$ such that $g\cdot W_0 = W_0$.
Then $g$ acts on the orthogonal complement $\DD^\perp$. 
Recall that $\mt{G}_2$ is contained in $SO(\II_k)$. 
Therefore, on one hand we have an injection 
$\iota:\ \mt{Stab}_{\mt{G}_2} (W_0)\hookrightarrow SO(\DD^\perp) \simeq \mt{SO}_4$. 
On the other hand, we know that the elements of $SO(\DD^\perp)$ 
are completely determined by how they act on the
part of the basis $e_5,e_6,e_7,e_8$ of $\OO$. 
Indeed, we see this from Figure~\ref{F:FANO}, 
which gives us the multiplicative structure of $\OO$.

\begin{figure}[htp]
\begin{center}
\begin{tabular}{ l| |l |l |l |l ||l |l |l |l  | }
  
           & $e$ & $e_2$ & $e_3$ & $e_4$ & $e_5$ & $e_6$ & $e_7$ & $e_8$ \\   \hline \hline
$e$ & $e$ & $e_2$ & $e_3$ & $e_4$ & $e_5$ & $e_6$ & $e_7$ & $e_8$  \\   \hline
$e_2$ & $e_2$& $-e$ & $e_4$ & $-e_3$ & $e_6$ & $-e_5$ & $-e_8$ & $e_7$  \\   \hline
$e_3$ & $e_3$ & $-e_4$ & $-e$ & $e_2$ & $e_7$ & $e_8$ & $-e_5$ & $-e_6$  \\   \hline
$e_4$ & $e_4$ & $-e_3$ & $-e_2$ & $-e$ & $e_8$ & $-e_7$ & $e_6$ & $-e_5$  \\   \hline \hline

$e_5$ & $e_5$ & $-e_6$ & $-e_7$ & $-e_8$ & $e$ & $-e_2$ & $-e_3$ & $-e_4$  \\   \hline
$e_6$ & $e_6$ & $e_5$ & $-e_8$ & $e_7$ & $e_2$ & $e$ & $e_4$ & $-e_3$  \\   \hline
$e_7$ & $e_7$ & $e_8$ & $e_5$ & $-e_6$ & $e_3$ & $-e_4$ & $e$ & $e_2$  \\   \hline

$e_8$ & $e_8$ & $-e_7$ & $e_6$ & $e_5$ & $e_4$ & $e_3$ & $-e_2$ & $e$  \\   \hline

\end{tabular}
\caption{Multiplication table for split octonions.}
\label{F:FANO}
\end{center}
\end{figure}

\begin{comment}
%%%% DELETE BEFORE SUBMISSION
\begin{figure}[htp]
\begin{center}
\begin{tikzpicture}
\tikzstyle{point}=[ball color=orange, circle, draw=black, inner sep=0.1cm]
\node (v7) at (0,0) [point] {$e_5$};
\node (v1) at (90:2cm) [point] {$e_8$};
\node (v2) at (210:2cm) [point] {$e_7$};
\node (v4) at (330:2cm) [point] {$e_6$};
\node (v3) at (150:1cm) [point] {$e_2$};
\node (v6) at (270:1cm) [point] {$e_3$};
\node (v5) at (30:1cm) [point] {$e_4$};
\draw (0,0) circle (1cm);
\draw[<-, very thick] (v1) -- (v3);
\draw[->, very thick] (v3) -- (v2);
\draw[->, very thick] (v2) -- (v6);
\draw[->, very thick] (v6) -- (v4);
\draw[->, very thick] (v4) -- (v1);
\draw[->, very thick] (v5) -- (v1);
\draw[->, very thick] (v3) -- (v7);
\draw[->, very thick] (v7) -- (v4);
\draw[->, very thick] (v5) -- (v7);
\draw[->, very thick] (v7) -- (v2);
\draw[->, very thick] (v6) -- (v7);
\draw[->, very thick] (v7) -- (v1);
\end{tikzpicture}
\end{center}
\caption{Fano plane mnemonic for split (imaginary) octonions.}
\label{F:FANO}
\end{figure}
%%%% DELETE BEFORE SUBMISSION
\end{comment}

The multiplication table of $e_5,e_6,e_7,e_8$ includes $e_2,e_3,e_4$ and $e$, therefore, the action of $g$ on $W$ is uniquely determined 
by the action of $g$ on $e_5,e_6,e_7,e_8$. It follows that $\iota$ is surjective as well, hence it is an isomorphism.
\end{proof}

\begin{Remark}
The element-wise stabilizer of $\DD$ in $G$ is isomorphic to $\mt{SL}_2$. (See Proposition 2.2.1~\cite{SV}).
Heuristically, this follows from the fact that $\OO=\DD \oplus \DD$, and that $(\DD,N) = (\mt{Mat}_2,\det)$.
\end{Remark}

Since $\OO= \DD \oplus \DD$ and $\DD = \mt{Mat}_2$, we take $\{ (e_{ij},0):\ i,j=1,2 \} \cup \{ (0,e_{ij}):\ i,j=1,2\}$ as a basis for $\OO$. 
Here, $e_{ij}$ is the $2\times 2$ matrix with 1 at the $i,j$th position and 0's everywhere else. 
Recall that $\II$ is the orthogonal complement of the identity $e= (e_{11}+e_{22},0)$ of $\OO$.
A straightforward computation shows that 
$(x,y)\in \OO$ is in $\II$ if and only if the trace of $x$ is 0.
Thus, we write $\II = \mf{sl}_2 \oplus \mt{Mat}_2$ (we are going to make use of Lie algebra structure on $\mf{sl}_2$ in the sequel).

\vspace{.5cm}

Let $W\in \mt{Gr}(3,\II)$ be a 3-plane in $\II$ and let $\{u_1,u_2,u_3\}$ be a basis for $W$. 
The map $P: \mt{Gr}(3,\II) \rightarrow \PP(\bigwedge^3 \II)$ defined by $P(W) = [u_1 \wedge u_2 \wedge u_3]$ is the Pl\"ucker embedding of 
$\mt{Gr}(3,\II)$ into the 34 dimensional projective space $\PP(\bigwedge^3 \II)$. 
Note that $\mt{GL}(\II)$ acts on both of the varieties $\mt{Gr}(3,\II)$ and $\bigwedge^3 \II$ via its natural action on $\II$. 
Note also that the Pl\"ucker embedding is equivariant with respect to these actions. 
In particular, it is equivariant with respect to the subgroup $\mt{G}_2$.

We make the identifications
\begin{align}
1 \leftrightarrow \begin{pmatrix}
  1 & 0 \\
  0 & 1
\end{pmatrix},\
\ii \leftrightarrow \begin{pmatrix}
  i & 0 \\
  0         & -i
\end{pmatrix},\
\jj \leftrightarrow \begin{pmatrix}
  0 & 1 \\
  -1 & 0
\end{pmatrix}, \
\kk \leftrightarrow \begin{pmatrix}
  0         & i \\
  i & 0
\end{pmatrix},
\end{align}
and take $\{ (\ii,0),(\jj,0),(\kk,0),(0,1),(0,\ii),(0,\jj),(0,\kk) \}$ as a basis for $\II$.
Let $W_0$ denote the span of $\{ (\ii,0),(\jj,0),(\kk,0)\}$ and let $W_0^*$ denote the span of $\{(0,\ii),(0,\jj),(0,\kk)\}$.
Thus,
\begin{align}
\II = W_0 \oplus W_0^* \oplus \C.
\end{align}
\begin{Remark}
It is noted earlier that a copy of $\mf{sl}_2$ sits in $\II$:
$$
\mf{sl}_2=\mf{sl}_2 \oplus 0 \hookrightarrow \mf{sl}_2 \oplus \mt{Mat}_2 = \II.
$$ 
This copy of $\mf{sl}_2$ is $W_0$ as a vector space.
\end{Remark}
\begin{Remark}
A straightforward calculation shows that if $(0,v)\in W_0^*$, then for all $(x,0)\in \mf{sl}_2$, 
$(x,0) (0,v) = (0,vx)$.
\end{Remark}

Next, we elaborate on a portion of the discussion from~\cite{FH}, \S 22.3 and analyze $\bigwedge^3 \II$ more closely.

Let $U$ denote $W_0 \oplus W_0^*$ so that we have 
$\bigwedge^3 (W_0 \oplus W_0^* \oplus \C ) = \bigoplus_{n=0}^3 \bigwedge^n U\otimes \bigwedge^{3-n} \C 
= \bigwedge^3 U \oplus \bigwedge^2 U$.
Since $\dim W_0 =\dim W_0^* = 3$, we have canonical identifications $W_0 = \bigwedge^{2} W_0^*$ and $W_0^* =\bigwedge^{2} W_0$.
It follows that 
\begin{align*}
\bigwedge^3 U &= (\C \otimes \C) \oplus (W_0 \otimes \bigwedge^2 W_0^*) \oplus (\bigwedge^2 W_0 \otimes W_0^*) \oplus (\C \otimes \C) \\
&= \C \oplus (W_0 \otimes W_0) \oplus (W_0^* \otimes W_0^*) \oplus  \C
\end{align*}
and that 
\begin{align*}
 \bigwedge^2 U &=  \bigwedge^2 W_0\otimes \C \oplus W_0 \otimes W_0^* \oplus \C \otimes \bigwedge^{2} W_0^*  \\
 &=  W_0^* \oplus (W_0 \otimes W_0^*) \oplus  W_0.
\end{align*}
Putting all of the above together we see that 
\begin{align*}
\bigwedge^3 \II &= \C \oplus (W_0 \otimes W_0) \oplus (W_0^* \otimes W_0^*) \oplus  \C \oplus W_0^* \oplus (W_0 \otimes W_0^*) \oplus  W_0 \\
 &= \II \oplus (W_0 \otimes W_0) \oplus (W_0^* \otimes W_0^*) \oplus  (W_0 \otimes W_0^*) \oplus \C.
\end{align*}

Next, we analyze $\mt{Sym}^2 \II$ more closely; 
\begin{align*}
\mt{Sym}^2 \II &= \mt{Sym}^2 ( U \oplus \C)\\ 
&=  (\mt{Sym}^2 U \otimes \C) \oplus (\mt{Sym}^1 U \otimes \C) \oplus (\C \otimes \mt{Sym}^2 \C) \\
&=  \mt{Sym}^2 W_0 \oplus (W_0 \otimes W_0^*) \oplus \mt{Sym}^2 W_0^* \oplus (W_0 \oplus W_0^*)\oplus \C  \\
&=  (\mt{Sym}^2 W_0 \oplus W_0^*) \oplus (W_0 \otimes W_0^*) \oplus (\mt{Sym}^2 W_0^* \oplus W_0) \oplus \C \\
&=  (\mt{Sym}^2 W_0 \oplus \bigwedge^2 W_0) \oplus (W_0 \otimes W_0^*) \oplus (\mt{Sym}^2 W_0^* \oplus \bigwedge^2 W_0^*) \oplus \C  \\
&=  \mt{End}(W_0) \oplus (W_0 \otimes W_0^*) \oplus \mt{End}(W_0^*) \oplus \C \\
&\simeq  (W_0\otimes W_0) \oplus (W_0 \otimes W_0^*) \oplus (W_0^*\otimes W_0^*) \oplus \C. 
\end{align*}
\begin{Remark}
The last term is only an isomorphism since we are using non-canonical identification of $W_0$ with $W_0^*$.
\end{Remark}
Therefore, we see that 
\begin{align}
\bigwedge^3 \II \simeq \mt{Sym}^2 \II \oplus \II.
\end{align}
Furthermore, it is true that $\mt{Sym}^2 \II = \Gamma_{2,0} \oplus \C$, where 
\begin{align}
\Gamma_{2,0} &= (W_0 \otimes W_0) \oplus (W_0^* \otimes W_0^*) \oplus  (W_0 \otimes W_0^*)  \notag \\
&= (W_0 \otimes \bigwedge^2 W_0^*) \oplus (\bigwedge^2 W_0 \otimes W_0^*)  \oplus  (W_0 \otimes W_0^*). \label{A:Gamma}
\end{align}
is an irreducible representation of $\mt{G}_2$ with highest weight $2\omega_1$, where $\omega_1$ is the highest weight 
of the first fundamental representation $\II$ of $\mt{G}_2$. (See~\cite{FH}, \S 22.3.)
Once the root system $\Phi = \{\alpha_1,\dots, \alpha_6,\beta_1,\dots, \beta_6\}$ is chosen as in~\cite{FH}, \S 22.2 (pg. 347), 
we see that $2\omega_1 = \alpha_1 + \alpha_3+\alpha_4$.

The structure of the representation of $\mt{G}_2$ on $\II$ 
can be spelled out to a finer degree once we linearize the action. 
Let $\mf{g}_2$ denote the Lie algebra of $\mt{G}_2$. 
It is well known that $\mf{g}_2$ contains a copy of $\mf{g}_0 = \mf{sl}_3$, and moreover, 
as a representation of $\mf{g}_0$ it has the following decomposition:
\begin{align*}
\mf{g}_2 = \mf{g}_0 \oplus W \oplus W^*,
\end{align*}
where $W$ is isomorphic to the standard 
3 dimensional representation $\C^3$ of $\mf{sl}_3$ (See~\cite{FH}, \S 22.2.). 
Furthermore, the unique 7 dimensional irreducible representation $V$ of $\mf{g}_2$
can be identified with $V= W\oplus W^* \oplus \C$ as an $\mf{sl}_3$-module. 
In our notation, we are going to take $W$ as $W_0$.
Before making this identification we choose a basis for $W$ using the root 
system $\Phi=\{\alpha_1,\dots, \beta_{12}\}$.

Let $V_i\subset V$ ($i=1,\dots, 6$) denote the eigenspace (corresponding to the eigenvalue $\alpha_i$) for the action of the maximal abelian 
subalgebra $\mf{h}\subset \mf{g}_2$ corresponding to $\Phi$. Let $Y_i$ be the root vector whose 
eigenvalue is $-\alpha_i=\beta_i$ for $i=1,\dots, 6$. 
Arguing as in pg. 354 of~\cite{FH}, we have the basis $e_1=v_4,e_2=w_1,e_3=w_3$ for $W$, where 
$w_i$'s are found as follows: 
\begin{align*}
v_3 &= Y_1(v_4),\\ 
v_1 &=-Y_2(v_3),\\ 
u &=Y_1(v_1),\\
w_1 &=\frac{1}{2}Y_1(u), \\ 
w_3 &= Y_2(w_1), \\ 
w_4&=-Y_1(w_3). 
\end{align*}
The corresponding dual basis elements $e_1^*,e_2^*,e_3^*$ are given by $w_4,v_1,v_3$, respectively.
The upshot of all of these is that we identify $W_0$ with $W$ in such a way that the basis $v_4,w_1,w_3$ 
corresponds (in the given order) to $(\ii,0),(\jj,0),(\kk,0)$, and the basis $w_4,v_1,v_3$ for $W^*$ corresponds to 
$(0,\ii),(0,\jj),(0,\kk)$. 
With respect to these identifications, we observe that the highest weight vector in $\mt{Sym}^2 V \subset \bigwedge^3 V$ 
of the highest weight $2\omega_1=\alpha_1+\alpha_3+\alpha_4$ is given by the 3-form $v_1 \wedge v_3 \wedge v_ 4$, or by
$ (\ii,0) \wedge  (0,\jj) \wedge (0,\kk)$ in the case of $\mt{Sym}^2 \II \subset \bigwedge^3 \II$. 
It is clear that the 3-form $ (\ii,0) \wedge  (0,\jj) \wedge (0,\kk)$ is actually an element of $W_0\otimes \bigwedge^2 W_0^* \subset \Gamma_{2,0}$ 
by~(\ref{A:Gamma}).

It is straightforward to verify that the octonions $ (\ii,0),  (0,\jj),$ and $(0,\kk)$ generate a quaternion algebra
which we denote by $\UU$. By Lemma~\ref{L:fixed quaternions} we see that the stabilizer subgroup of $\UU$ is $\mt{SO}_4$. 
It is well known that the highest weight vector in $\Gamma_{2,0} \subset \mt{Sym}^2 (\II)$ is the direction vector of 
the line that is stabilized by $\mt{SO}_4$.

We view the projectivization $\PP(\Gamma_{2,0})$ as a (closed) subvariety of $\PP(\bigwedge^3 \II)$. 
The image of $\mt{Gr}(3,\II)$ intersects $\PP(\Gamma_{2,0})$. 
In fact, by the above discussion we know that the image of the 3-plane $U_0 := \UU \cap \II \in \mt{Gr}(3,\II)$ under 
Pl\"ucker embedding is the $\mt{SO}_4$-fixed point $[u_0] \in \PP(\Gamma_{2,0})$.
On one hand, since it is a $\mt{G}_2$-equivariant isomorphism onto its image, the orbit $\mt{G}_2\cdot U_0$ in $\mt{Gr}(3,\II)$ is mapped 
isomorphically onto $\mt{G}_2\cdot [u_0]$ in $\PP(\Gamma_{2,0}) \subset \PP(\bigwedge^3 \II)$. 
On the other hand, as we are going to see in the sequel, the closure of the orbit $\mt{G}_2\cdot U_0$ in $\mt{Gr}(3,\II)$ is smooth,
however, the Zariski closure of the orbit $\mt{G}_2\cdot [u_0]$ in $\PP(\bigwedge^3 \II)$ is not. 
The latter closure is the smallest ``degenerate'' compactification of the symmetric variety $\mt{G}_2/\mt{SO}_4$, whereas the former 
compactification is the smallest, smooth $\mt{G}_2$-equivariant compactification.

\section{More on Octonions}

In this section we collect and improve some known facts about alternating 
forms on (split) composition algebras.

The multiplicative structure of a quaternion algebra (over $\R$ or $\C$) is always associative (but not commutative). 
To measure how badly the associativity of multiplication fails in $\OO$ one looks at the {\em associator}, defined by
\begin{align}\label{A:associator}
[x,y,z]  = (xy)z - x(yz) \text{ for all } x,y,z\in \OO_k.
\end{align}
It is well known that the associator is an alternating 3-form (see Section 1.4 of~\cite{SV}).

There are several other related multiplication laws on the imaginary part $\II_k$ of $\OO_k$.
For example, the ``cross-product'' is defined by 
$$
a\times b =\frac{1}{2}  ( ab- ba) \text{ for all } a,b\in \II_k.
$$
Obviously, the cross-product is alternating.

The ``dot product'' is defined by 
$$
a \cdot b = -\frac{1}{2} (ab + ba) \text{ for all } a,b\in \II_k.
$$
It is also obvious that $ab = a\times b - a\cdot b$ for $a,b\in \II_k$.

These products are easily extended to $\OO_k$. Indeed, any element of $\OO_k$ has the form $x=\alpha (1,0) + a$, 
where $\alpha \in k$, $a\in \II_k$, and if $y=\beta (1,0) + b$ is another element from $\OO_k$, then 
\begin{align}\label{A:like Zorn}
xy= (\alpha,a)(\beta,b) = (\alpha \beta - a\cdot b, \alpha a +\beta b + a \times b ),
\end{align}
where we use the identification $\alpha (1,0) + a = (\alpha, a)$.
In particular, if $x=a$ and $y=b$ are from $\II_k$, then $xy =(-x\cdot y,x\times y)$, hence 
\begin{align}\label{A:norm of cross product}
N(xy)= (x\cdot y)^2 + N(x\times y).
\end{align}

Now we focus on $k=\C$ and extend some results from~\cite{HL} to our setting. 
First, we re-label the basis $\{e,e_1,\dots, e_7\}$ for $\OO$ so that 
$\{e_1 = (\ii,0),e_2=(\jj,0),e_3=(\kk,0),e_4=(0,1),e_5=(0,\ii),e_6 =(0,\jj),e_7=(0,\kk)\}$
is the standard basis for $\II$. Consider the trilinear form 
\begin{align*}
\varphi (x,y,z) = \langle x \times y,z \rangle,\qquad x,y,z\in \II.
\end{align*}
\begin{Lemma}
$\varphi$ is an alternating 3-form on $\II$.
\end{Lemma}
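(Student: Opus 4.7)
The plan is to verify trilinearity first (which is essentially free from the definitions of $\times$ and $\langle\,,\,\rangle$), and then establish alternation in two stages. The antisymmetry $\varphi(x,y,z) = -\varphi(y,x,z)$ is immediate: since $a \times b = \frac{1}{2}(ab-ba)$ is built as a commutator, it is by construction antisymmetric in its two slots, so swapping the first two arguments of $\varphi$ flips the sign.

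The substantive content is antisymmetry between the second and third slots, i.e.\ $\varphi(x,y,z) = -\varphi(x,z,y)$. My plan is to first rewrite $\varphi$ in terms of octonion multiplication by invoking the decomposition
\[
xy \;=\; -x\cdot y \;+\; x\times y, \qquad x,y\in\II,
\]
coming from~(\ref{A:like Zorn}). Since $x\cdot y$ lives in the scalar line $\C\cdot e$ and $z\in\II$, and since $\II = e^{\perp}$, the scalar piece is orthogonal to $z$. Therefore
\[
\langle x\times y,\, z\rangle \;=\; \langle xy,\, z\rangle.
\]

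Next I would invoke the standard adjunction identity $\langle ab,c\rangle = \langle b, \bar a c\rangle$, which holds in any composition algebra and is available from the setup in \cite{SV}. For $x\in\II$ we have $\bar x = -x$, so
\[
\langle xy,\, z\rangle \;=\; \langle y,\, \bar x z\rangle \;=\; -\langle y,\, xz\rangle \;=\; -\langle xz,\, y\rangle.
\]
Converting back via the same scalar-orthogonality observation yields $\langle x\times y,z\rangle = -\langle x\times z, y\rangle$, which is precisely $\varphi(x,y,z) = -\varphi(x,z,y)$. Combining this with the antisymmetry in the first two slots gives full alternation, and the lemma follows.

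The only step that is more than a bookkeeping exercise is the adjunction identity $\langle ab,c\rangle = \langle b,\bar a c\rangle$; if one wants to be self-contained one would polarize the composition law $N(ab) = N(a)N(b)$ and use the identity $a\bar a = N(a)\,e$. Everything else reduces to the fact that the scalar and imaginary parts of $\OO$ are orthogonal and that $\bar x = -x$ on $\II$, both of which are built into the construction recalled in the preceding pages.
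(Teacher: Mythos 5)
Your proof is correct, but it takes a genuinely different route from the paper. The paper's proof is a one-line reduction: since the cross-product and the inner product are bilinear, it suffices to check alternation on triples from the basis $\{e_1,\dots,e_7\}$, and the authors report verifying those finitely many cases in Maple. You instead give a conceptual, computer-free argument: antisymmetry in the first two slots comes for free from the commutator form of $\times$, and antisymmetry in the last two slots follows from the chain $\langle x\times y, z\rangle = \langle xy, z\rangle = \langle y, \bar{x}z\rangle = -\langle xz, y\rangle = -\langle x\times z, y\rangle$, using the orthogonality of the scalar line to $\II$, the adjunction identity for composition algebras, and $\bar{x}=-x$ on $\II$. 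All of these ingredients are sound (the adjunction identity is standard, e.g.\ it follows from Lemma 1.3.2 of \cite{SV} or by polarizing $N(ab)=N(a)N(b)$ as you indicate), and the two transpositions generate $S_3$, so full alternation follows; over $\C$ antisymmetry and alternation coincide. What your approach buys: it works uniformly for any composition algebra over any field of characteristic $\neq 2$ rather than for one explicit multiplication table, and it establishes the identity $\varphi(x,y,z)=\langle xy,z\rangle$ as a byproduct --- a fact the paper records separately as equation (\ref{L:3form}) in the remark immediately following the lemma. The paper's approach is shorter to state but opaque and tied to a machine check.
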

\begin{proof}
Since both cross-product and the inner product $\langle, \rangle$ are bilinear, we see it is enough to 
check the assertion on the basis $\{e_1,\dots, e_7\}$. We verified this by using software called Maple.
\end{proof}
\begin{Remark}
It follows from (\ref{A:like Zorn}) that 
\begin{align}\label{L:3form}
\varphi(x,y,z) = \langle xy , z \rangle \ \text{ for } x,y,z\in \II.
\end{align}
It is not difficult to verify (by using Maple, or by hand) that $\varphi$ is equal to the 3-form 
\begin{align}\label{A:phi}
\varphi= e^{123} - e^{145} + e^{167} -e^{246} -e^{257} - e^{347} + e^{356},
\end{align}
where $e^{ijk}=de_i\wedge de_j \wedge de_k$ as before. 
In particular, we see that from (\ref{L:3form}) that $\mt{G}_2=\mt{Aut}(\OO)$ stabilizes the form (\ref{A:phi}).
\end{Remark}

\begin{Definition}
For a 3-plane $W\in \mt{Gr}(3,\II)$ we define $\varphi(W)$ to be the evaluation of $\varphi$ on 
any orthonormal basis $\{x,y,z\}$ of $W$. 
\end{Definition}

\begin{Theorem}\label{T:generates iff}
If a 3-plane $W\in \mt{Gr}(3,\II)$ is associative, 
then $\varphi(x,y,z)\in \{-1,+1\}$ for any orthonormal basis $\{x,y,z\}$ of $W$.    
\end{Theorem}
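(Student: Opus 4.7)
My plan is to establish both implications using the identity $xy = -\langle x,y\rangle e + x\times y$ for $x,y \in \II$ (so $xy$ lies in $\II$ whenever $x \perp y$), the equality $\varphi(x,y,z) = \langle xy, z\rangle$ from equation (\ref{L:3form}), and the composition-algebra identities $\bar x x = N(x)\, e$ and $\langle xy, w\rangle = \langle y, \bar x w\rangle$, together with $\bar x = -x$ for $x \in \II$.

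For the forward direction, I assume $W = D \cap \II$ for a quaternion subalgebra $D$ and fix an orthonormal basis $\{x,y,z\}$ of $W$. Since $D$ is closed under multiplication and $\langle x, y\rangle = 0$, we have $xy = x\times y \in D \cap \II = W$. The composition-algebra identities give $\langle xy, x\rangle = \langle y, \bar x x\rangle = N(x)\langle y, e\rangle = 0$ (since $y \in \II = e^\perp$), and similarly $\langle xy, y\rangle = 0$, so $xy$ is a scalar multiple of $z$. Writing $xy = \alpha z$ and computing norms gives $\alpha^2 = N(xy) = N(x)N(y) = 1$, hence $\varphi(x,y,z) = \alpha \in \{-1, +1\}$.

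For the converse, I suppose $\varphi(x,y,z) \in \{-1,+1\}$ for every orthonormal basis of $W$. Fixing one such basis and setting $\alpha := \varphi(x,y,z)$, the same orthogonality computations show that $xy \in \II$ and that $xy$ is perpendicular to $x$ and $y$, so I decompose $xy = \alpha z + w$ with $w \in W^\perp \cap \II$. The norm identity $1 = N(xy) = \alpha^2 + N(w)$ forces $N(w) = 0$; once I know $w = 0$, the product $xy$ lies in $W$. Doing the same for the cyclic pairs $(y,z)$ and $(z,x)$ then shows that $\C e \oplus W$ is closed under octonion multiplication, hence is a $4$-dimensional subalgebra of the alternative algebra $\OO$. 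By Proposition 1.5.2 of \cite{SV} such a subalgebra is associative, and a $4$-dimensional associative subalgebra of a composition algebra is itself a quaternion subalgebra $D$, whence $W = D \cap \II$ is associative.

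The main obstacle is promoting $N(w) = 0$ to $w = 0$. In the classical positive-definite real Harvey--Lawson setting this is immediate, but the split complex form on $\II$ admits nonzero isotropic vectors, so further work is needed. My plan is to exploit the hypothesis cyclically: writing $yz = \alpha x + w'$ and $zx = \alpha y + w''$ with $N(w') = N(w'') = 0$ by the same reasoning, and then expanding the associator $[x,y,z] = (xy)z - x(yz)$ using $x^2 = z^2 = -e$ to obtain a relation of the shape $[x,y,z] = wz - xw'$ and its cyclic variants. Since $w, w', w''$ all lie in the $4$-dimensional non-degenerate subspace $W^\perp$ and are isotropic, combining these associator relations with the constraints on how left- and right-multiplication by $W$ acts on $W^\perp$ should force $w = w' = w'' = 0$, closing the argument.
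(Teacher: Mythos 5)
Your forward implication is correct and complete: from $D$ being closed under multiplication, the identities $\langle xy,x\rangle=\langle y,\bar{x}x\rangle=N(x)\langle y,e\rangle=0$ and $N(xy)=N(x)N(y)$ do force $xy=\pm z$ and hence $\varphi(x,y,z)=\pm1$. This is a more self-contained route than the paper's, which normalizes $x,y$ to $(\ii,0),(\jj,0)$ using the transitivity of $\mt{G}_2$ on special $(1,1)$-pairs; for this direction the two arguments are interchangeable.

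The converse is where the genuine gap lies, and you have located it exactly but not closed it: your final paragraph ends with ``should force $w=w'=w''=0$,'' which is a plan rather than an argument. Moreover, no amount of cyclic bookkeeping can complete it, because the implication ``$\varphi=\pm1$ on an orthonormal basis $\Rightarrow W$ associative'' fails over $\C$. Concretely, take $x=e_1$, $y=e_2$, $z=e_3+w$ with $w=e_4+ie_5$, so that $N(w)=0$ and $w\perp e_1,e_2,e_3$. Then $\{x,y,z\}$ is an orthonormal basis of $W=\mt{span}\{x,y,z\}$ (note $N(z)=N(e_3)+N(w)=1$), and $\varphi(x,y,z)=\langle e_1e_2,z\rangle=\langle e_3,e_3+w\rangle=1$; since any two orthonormal bases of $W$ differ by an element of $O_3(\C)$, every orthonormal basis of $W$ gives $\pm1$. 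Yet $[e_1,e_2,z]=[e_1,e_2,w]=2e_7-2ie_6\neq0$ (an isotropic associator, consistent with (\ref{A:ass 2})), and $A(W)$ contains $e,e_1,e_2,e_1e_2=e_3$ and therefore $w=z-e_3$, so $\dim A(W)>4$ and $W$ is not of the form $D\cap\II$. Thus the isotropic vector $w$ you worry about is not a removable technicality: in the split setting the Harvey--Lawson criterion genuinely requires the vanishing of the associator (equivalently of $\chi$), and $\varphi=\pm1$ is only necessary, not sufficient. Note that the paper's own proof shares this blind spot---it establishes ``$W$ associative $\iff z=\pm xy$'' and then ``$z=\pm xy\Rightarrow\varphi=\pm1$,'' but never recovers $z=\pm xy$ from $\varphi=\pm1$, which is precisely where the isotropic component of $xy-\varphi(x,y,z)z$ escapes control.
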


\begin{proof}
Any two elements $x,y$ of an orthonormal triplet $(x,y,z)$ from $\II$ form a ``special $(1,1)$-pair'' in the sense 
of~\cite{SV}, Definition 1.7.4.\footnote{A pair $(x,y)$ of elements from a composition algebra is called a special 
$(\lambda,\mu)$-pair if $ \langle x, e \rangle=\langle y, e \rangle=\langle x, y \rangle=0 $, $N(x)=\lambda$ and $N(y)=\mu$.}
Since $\mt{G}_2$ acts transitively on special $(1,1)$-pairs, and since $((\ii,0),(\jj,0))$ is such, there exists $g\in \mt{G}_2$ such that 
$g(x) = (\ii,0)$, $g(y)=(\jj,0)$.  In particular, it follows that $g(xy)= (\kk,0)$. 

We claim that if $x,y,z$ generates a quaternion algebra, then $g(z) = \pm (\kk,0)$. 
Indeed, unless $xy$ is a scalar multiple of $z$, the span in $\II$ of $e,x,y,z$ and $xy$ is 5 dimensional, hence 
the composition algebra generated by $x,y,z$ is not a quaternion subalgebra. 
It follows that, if $\{1,x,y,z\}$ is an orthonormal basis for a quaternion subalgebra, then $z$ is a scalar multiple of $xy$.
Since the norm of $z$ is 1, we see that $z = \pm xy$, hence $g(z)=\pm(\kk,0)$. Then 
\begin{align*}
\langle xy, z \rangle &= \langle g^{-1} ((\ii,0)) g^{-1}((\jj,0)), g^{-1}(\pm (\kk,0))\rangle \\
&= \langle g^{-1} ((\ii,0)(\jj,0)), g^{-1}(\pm (\kk,0))\rangle \\
&= \langle (\ii,0)(\jj,0),\pm(\kk,0)\rangle \\
&= \pm 1.
\end{align*}
\end{proof}

We define the {\em triple-cross product} on $\OO$ as follows  
\begin{align}
x \times y \times z = \frac{1}{2}( x(\bar y z) - z (\bar yx)) \ \text{ for all } x,y,z\in \OO.
\end{align}

\begin{Lemma}\label{L:trilinear and alternating}
The triple-cross product is trilinear and alternating. Moreover, $N(x\times y\times z) = N(x)N(y) N(z)$ for all 
$x,y,z\in \OO$ distinct from each other.
\end{Lemma}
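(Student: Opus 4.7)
The proof splits naturally along the three assertions of the lemma.

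\textbf{Trilinearity.} This is immediate: octonion multiplication is $\C$-bilinear and the conjugation $y\mapsto \bar y = 2\langle y,e\rangle e - y$ is $\C$-linear, so both $x(\bar y z)$ and $z(\bar y x)$ are trilinear in $(x,y,z)$, and hence so is their difference.

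\textbf{Alternating.} Antisymmetry in $(x,z)$ reads off the defining formula. For the $(x,y)$-swap, the claim collapses to the identity
\[
x(\bar y z) + y(\bar x z) \;=\; 2\,\langle x,y\rangle\,z,
\]
which is the first-slot polarization of $x(\bar x z) = N(x)\,z$. The latter follows from Artin's theorem (the subalgebra generated by $x$ and $z$ in $\OO$ is associative) together with $x\bar x = N(x)\,e$. The $(y,z)$-antisymmetry is handled symmetrically: polarize $y(\bar y x)=N(y)\,x$ in $y$ to get $y(\bar z x) + z(\bar y x) = 2\langle y,z\rangle\,x$.

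\textbf{Norm identity.} Using $N(u-v) = N(u)+N(v)-2\langle u,v\rangle$ together with multiplicativity $N(ab)=N(a)N(b)$ and $N(\bar y)=N(y)$, the expansion of the defining formula gives
\[
4\,N(x\times y\times z) \;=\; 2\,N(x)N(y)N(z) \;-\; 2\,\langle x(\bar y z),\, z(\bar y x)\rangle.
\]
The lemma is thereby reduced to the single cross-term identity
\[
\langle x(\bar y z),\, z(\bar y x)\rangle \;=\; -\,N(x)\,N(y)\,N(z).
\]
I would attack this by combining the polarization
\[
\langle ab_1,\, cb_2\rangle + \langle ab_2,\, cb_1\rangle \;=\; 2\,\langle a,c\rangle\,\langle b_1,b_2\rangle
\]
of the composition law (with $a=x$, $c=z$, $b_1=\bar y z$, $b_2=\bar y x$) with the flexibility law $x(\bar y x)=(x\bar y)x$, which controls the auxiliary term $\langle x(\bar y x),\, z(\bar y z)\rangle$ produced along the way. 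Because every step rests on a polynomial composition-algebra identity valid in any characteristic distinct from $2$, the Harvey--Lawson argument from the real case in \cite{HL} transports verbatim to our complex setting.

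The main obstacle is exactly this last cross-term computation. It is the only place where the non-associativity of $\OO$ genuinely intervenes: reordering the products $x(\bar y z)$ and $z(\bar y x)$ cannot be done by associativity and must be mediated by the Moufang and flexible identities. All the other steps are formal consequences of trilinearity, polarization, and the basic properties $\bar x x = N(x)e$ and $N(ab)=N(a)N(b)$.
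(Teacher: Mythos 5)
Your treatment of trilinearity and of the alternating property is correct and amounts to the same computation as the paper's: the paper checks vanishing on the three diagonals $x\times x\times z$, $x\times y\times y$, $x\times y\times x$ using $x(\bar xz)=N(x)z$ and $(\bar yy)=N(y)e$, while you use the polarized forms of exactly those identities; over $\C$ these are equivalent.

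The norm identity, however, is left with a genuine gap, and the step you defer would in fact fail as you have set it up. Your reduction of $N(x\times y\times z)=N(x)N(y)N(z)$ to the cross-term identity $\langle x(\bar yz),\,z(\bar yx)\rangle=-N(x)N(y)N(z)$ is algebraically correct, but you then only say you ``would attack'' it via polarization and flexibility; no amount of such manipulation can establish it, because the identity is false for general triples. Take $x=z$ with $N(x)N(y)\neq 0$: the left side is $N(x(\bar yx))=N(x)^2N(y)$ while the right side is $-N(x)^2N(y)$. Equivalently, $x\times y\times x=0$ by the alternating property you just proved, while $N(x)N(y)N(x)\neq 0$, so the norm identity of the lemma cannot hold for all $x,y,z\in\OO$; it requires pairwise orthogonality (the identity valid for arbitrary triples has the Gram determinant $\det(\langle\cdot,\cdot\rangle)$ on the right, which is also what transporting Harvey--Lawson verbatim would give you, not the product of norms). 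Carrying out your own suggested computation makes this visible: polarizing the composition law gives $\langle x(\bar yz),z(\bar yx)\rangle=2N(y)\langle x,z\rangle^2-\langle x(\bar yx),z(\bar yz)\rangle$, and substituting $u(\bar yu)=2\langle u,y\rangle u-N(u)y$ leaves the extra terms $2N(y)\langle x,z\rangle^2+2N(z)\langle x,y\rangle^2+2N(x)\langle y,z\rangle^2-4\langle x,y\rangle\langle y,z\rangle\langle z,x\rangle$, which vanish precisely when $x,y,z$ are pairwise orthogonal. The paper's own proof silently makes this restriction by reducing to distinct orthonormal basis vectors, where $x(\bar yz)=-z(\bar yx)$, hence $x\times y\times z=x(\bar yz)$ and the claim follows from multiplicativity of $N$. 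To close your argument you should either add the orthogonality hypothesis (which is all that is used later in the paper) and finish the cross-term computation as above, or prove the Gram-determinant version of the statement.
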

\begin{proof}
The trilinearity is obvious. To prove the second claim we check $x\times x \times z=0$, $x \times y \times y=0$, and $x\times y \times x=0$.
We use \cite{SV}, Lemma 1.3.3 $i)$ for the first two:
\begin{align*}
x\times x \times z &=  \frac{1}{2}( x(\bar x z) - z (\bar xx)) =  \frac{1}{2}( N(x)z - z N(x)) =0, \\
x\times y \times y &=  \frac{1}{2}( x(\bar y y) - y (\bar yx)) =  \frac{1}{2}( x N(y) -  N(y)x) =0,\\
x\times y \times x &=  \frac{1}{2}( x(\bar y x) - x (\bar yx)) =0.
\end{align*}
Finally, to prove $N(x\times y\times z) = N(x)N(y) N(z)$ we expand $x\times y \times z$ in the orthonormal basis
$e=(1,0),e_1,\dots, e_7$ of $\OO$. In particular, this allows us to assume that $x,y$ and $z$ as scalar multiples of standard basis vectors. 
Now it is straightforward to verify (by Maple) that $x(\bar y z ) =- z (\bar y x)$, hence $x\times y \times z = x (\bar y z )$ and our claim follows
by taking norms. 
\end{proof}

%Next we relate $\langle xy, z \rangle, x\times y \times z$, and $x\wedge y\wedge z$. 

Next, we show that the ``associator identity'' of Harvey-Lawson (Theorem 1.6~\cite{HL}) holds for split octonion algebras.
\begin{Theorem}\label{T:associator identity}
For all $x,y,z\in \II$, the associator $[x,y,z]$ lies in $\II$, and 
\begin{align}\label{A:ass 1}
x\times y \times z= \langle xy,z\rangle e+  [ x,y,z],
\end{align}
where $e$ is the identity element $(1,0)$ of $\OO$.
Moreover, 
\begin{align}\label{A:ass 2}
| x\wedge  y \wedge z | = \varphi(x,y,z)^2 + N([x,y,z]),
\end{align}
where $ | x\wedge  y \wedge z |$ is defined as $N(x)N(y)N(z)$.
\end{Theorem}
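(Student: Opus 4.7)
The plan is to handle the three claims in order: first, that the associator of three imaginary octonions is imaginary; second, the associator identity (\ref{A:ass 1}); third, the norm identity (\ref{A:ass 2}). The third will drop out formally from the first two together with Lemma~\ref{L:trilinear and alternating}, so the real work is in the first two.

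For the first claim I apply octonion conjugation to $[x,y,z]$. Using $\overline{ab}=\bar b\bar a$ and the fact that $\bar u=-u$ for $u\in\II$, a direct expansion yields
\[
\overline{[x,y,z]}=\overline{(xy)z}-\overline{x(yz)}=\bar z(\bar y\bar x)-(\bar z\bar y)\bar x=-z(yx)+(zy)x=[z,y,x],
\]
and since the associator is alternating, $[z,y,x]=-[x,y,z]$. Therefore $\overline{[x,y,z]}=-[x,y,z]$, which forces $[x,y,z]\in\II$.

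For the second claim I unpack the triple cross product using $\bar y=-y$, then eliminate the non-associativity by substituting $a(bc)=(ab)c-[a,b,c]$ and invoking the alternating property of the associator. After this rearrangement the identity (\ref{A:ass 1}) reduces to showing that
\[
\tfrac12\bigl((zy)x-(xy)z\bigr)=\langle xy,z\rangle\, e\qquad(x,y,z\in\II),
\]
i.e.\ that the left-hand side is a scalar multiple of the identity with the expected coefficient. This is the main obstacle of the proof. I would attack it in two steps: first, verify scalarity by conjugating again (as in the first claim) to see that the left-hand side is preserved under $\,\bar{}\,$ and hence lies in $\C e$; second, pin down the scalar by pairing both sides with $e$ via $\langle\cdot\,,e\rangle$ and applying the adjunction $\langle ab,c\rangle=\langle b,\bar a c\rangle$ that holds in any composition algebra (Lemma 1.3.2 of~\cite{SV}). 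A cleaner alternative, in the spirit of the rest of the paper, is to observe that both sides of (\ref{A:ass 1}) are $\mt{G}_2$-equivariant alternating trilinear maps $\bigwedge^3\II\to\OO$; since $\bigwedge^3\II$ contains each of $\II$ and the trivial representation with multiplicity one, Schur's lemma reduces the task to comparing the two sides on a single triple of standard basis vectors, which can be checked directly in Maple exactly as the authors did for (\ref{L:3form}) and (\ref{A:phi}).

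The third claim follows immediately. By the first claim, the right-hand side of (\ref{A:ass 1}) is the decomposition of $x\times y\times z$ along the orthogonal splitting $\OO=\C e\oplus \II$; since $N$ is additive on orthogonal summands and $N(e)=1$, this gives $N(x\times y\times z)=\varphi(x,y,z)^2+N([x,y,z])$. Lemma~\ref{L:trilinear and alternating} rewrites the left-hand side as $N(x)N(y)N(z)=|x\wedge y\wedge z|$, yielding (\ref{A:ass 2}).
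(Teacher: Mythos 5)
Your derivation of (\ref{A:ass 2}) from (\ref{A:ass 1}) together with Lemma~\ref{L:trilinear and alternating} and the orthogonal splitting $N(\alpha e+u)=\alpha^2+N(u)$ is exactly the paper's argument. For the first two assertions the paper simply reduces to basis triples by multilinearity and the alternating property and checks them in Maple, so your conjugation proof that $\overline{[x,y,z]}=[z,y,x]=-[x,y,z]$, hence $[x,y,z]\in\II$, is a genuine conceptual improvement over what the paper does. The difficulty is entirely in your treatment of (\ref{A:ass 1}).

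Your reduction of (\ref{A:ass 1}) to the identity $\tfrac12\bigl((zy)x-(xy)z\bigr)=\langle xy,z\rangle e$ is algebraically correct, but that identity is false, and the conjugation argument you propose for it cannot work: the same computation as in your first step gives
\begin{align*}
\overline{(zy)x-(xy)z}=z(yx)-x(yz)=\bigl((zy)x-(xy)z\bigr)+2[x,y,z],
\end{align*}
so the expression is conjugation-invariant only when the associator vanishes; its imaginary part is in fact $-\tfrac12[x,y,z]$, not $0$. Concretely, in the paper's model $\OO=\DD\oplus\DD$ take $x=(\ii,0)$, $y=(0,\jj)$, $z=(0,1)$. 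Then $xy=(0,-\kk)$, $(xy)z=(-\kk,0)$, $(zy)x=(\kk,0)$, so $\tfrac12\bigl((zy)x-(xy)z\bigr)=(\kk,0)\notin\C e$ while $\langle xy,z\rangle=0$. The same triple shows why you were unable to close the argument: one finds $x\times y\times z=(-\kk,0)$ but $[x,y,z]=(-2\kk,0)$, so (\ref{A:ass 1}) as printed fails here; the identity your (correct) reduction actually yields is $x\times y\times z=\langle xy,z\rangle e+\tfrac12[x,y,z]$, with the corresponding $\tfrac14 N([x,y,z])$ in (\ref{A:ass 2}) --- which is the form of the associator equality in Harvey--Lawson. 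Two further cautions about your Schur-lemma alternative: $\mt{Hom}_{\mt{G}_2}(\bigwedge^3\II,\OO)$ is two-dimensional (a trivial summand and a copy of $\II$ on each side), so checking a single triple does not suffice; you need one associative triple to fix the scalar component and one non-associative triple to fix the $\II$-component, and it is precisely on the non-associative triples that the factor of $2$ appears.
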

\begin{proof}
By linearity and alternating property, it is enough to prove our first two assertions for (orthonormal) triplets $(x,y,z)$ from the basis $\{e_1,\dots, e_7\}$
and once again this is straightforward to verify by using Maple. 
To prove (\ref{A:ass 2}), we note as in the proof of Lemma~\ref{L:trilinear and alternating} that 
$N(x\times y \times z) = N(x (\bar y z))=N(x)N(y)N(z)= | x \wedge y \wedge z |$.
Thus, our claim now follows from the simple fact that $N(\alpha e + u ) = \alpha^2 +N(u)$ whenever $u\in e^\perp, \alpha \in \C$.
\end{proof}

\begin{Corollary}
Let $W\in \mt{Gr}(3,\II)$ be a 3-plane and let $x,y,z$ be an orthonormal basis for $W$ (which always exists by
Gram-Schmidt process). If $W$ is associative, then $[x,y,z] = 0$. 
\end{Corollary}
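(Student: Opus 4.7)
The plan is to combine the associator identity from Theorem~\ref{T:associator identity} (equation~(\ref{A:ass 2})) with the characterization of associativity via $\varphi$ from Theorem~\ref{T:generates iff}. For the given orthonormal basis $x,y,z$ of $W$, we have $|x\wedge y\wedge z|=N(x)N(y)N(z)=1$, so identity~(\ref{A:ass 2}) specializes to
\[
1 \;=\; \varphi(x,y,z)^2 + N([x,y,z]).
\]

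For the forward direction, suppose $W$ is associative, so that $W = D\cap \II$ for some quaternion subalgebra $D\subset\OO$. Since $D$ is associative as an algebra, the associator vanishes identically on triples from $D$, and in particular $[x,y,z]=0$ because $x,y,z\in W\subset D$.

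For the converse, assume $[x,y,z]=0$. Then $N([x,y,z])=0$, and the displayed identity forces $\varphi(x,y,z)^2=1$, so $\varphi(x,y,z)\in\{-1,+1\}$. Theorem~\ref{T:generates iff} then delivers that $W$ is associative.

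I do not expect a substantial obstacle: the corollary essentially repackages identity~(\ref{A:ass 2}) in geometric language, with Theorem~\ref{T:generates iff} serving as the bridge between the analytic condition on $\varphi$ and the algebraic notion of associativity. One possible worry is that in the complex split setting the quadratic form $N$ is isotropic, so a priori $N([x,y,z])=0$ need not imply $[x,y,z]=0$; but the argument only uses the trivial implication $[x,y,z]=0 \Rightarrow N([x,y,z])=0$, so this subtlety never enters.
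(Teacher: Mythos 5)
Your proof is correct, and it takes a genuinely different (and cleaner) route than the paper's. The paper proves the corollary by claiming that ``by linearity and the alternating property'' it suffices to verify the biconditional ``$\varphi(x,y,z)\in\{-1,+1\}$ if and only if $[x,y,z]=0$'' for triples drawn from the standard basis $\{e_1,\dots,e_7\}$, and then checking those finitely many cases in Maple. That reduction is actually delicate, since the statement being checked is not a multilinear identity, so verifying it on basis triples does not formally establish it for arbitrary orthonormal triples. Your argument sidesteps both the computer check and the reduction by deducing everything from two results already in hand: the identity (\ref{A:ass 2}) of Theorem~\ref{T:associator identity}, which for an orthonormal triple reads $1=\varphi(x,y,z)^2+N([x,y,z])$ and hence forces $\varphi(x,y,z)=\pm1$ once $[x,y,z]=0$, and Theorem~\ref{T:generates iff}, which converts $\varphi(x,y,z)=\pm1$ back into associativity of $W$. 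Your forward direction, using that the quaternion subalgebra $D\supset W$ is associative so the associator vanishes on $W$, is the natural one, and your remark about isotropy of $N$ is exactly right: only the trivial implication $[x,y,z]=0\Rightarrow N([x,y,z])=0$ is needed, so the split setting causes no trouble. The one point worth adding is that Theorem~\ref{T:generates iff} is phrased in terms of ``any orthonormal basis'' while you verify $\varphi=\pm1$ for a single one; this is harmless because $\varphi$ is alternating and an orthonormal change of basis has determinant $\pm1$, so the condition $\varphi(x,y,z)\in\{-1,+1\}$ is independent of the choice of orthonormal basis of $W$.
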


\begin{proof}
By linearity and alternating property, it is enough to prove the statement ``$\varphi(x,y,z)\in \{-1,+1\}$ if and only if $[x,y,z]=0$''
on the orthonormal basis $\{e_1,\dots, e_7\}$. We verified the cases by using Maple. 
The rest of the proof follows from Theorem~\ref{T:generates iff}.
\end{proof}

Recall that when $x,y$, and $z$ are orthonormal vectors from $\II$, $x\times y\times z = x (\bar y z)$. 
We verified by using Maple that if the associator $[x,y,z]$ is nonzero for $\{x,y,z\} \subset \{e_1,\dots, e_7\}$, then 
\begin{align}\label{A:nice}
x\times y \times z = [x,y,z] \ \text{ and } \ \varphi(x,y,z)=0,
\end{align}
and if $[x,y,z]=0$, then 
\begin{align}\label{A:nice2}
x\times y \times z =  \varphi(x,y,z)e,
\end{align}
which conforms with (\ref{A:ass 1}).
It is not difficult to show that the form $*\varphi (u,v,w,z)$ defined by 
$$
*\varphi (u,v,w,z) = \langle u\times v \times w, z \rangle
$$ 
is an alternating 4-form, and it can be expressed as in
\begin{align}\label{A:4-form}
*\varphi = -e^{4567} + e^{2367} - e^{2345} + e^{1357} + e^{1346} + e^{1256} - e^{1247}.
\end{align}
This can be seen from the fact that the monomials of $\varphi$ and $*\varphi$ are complementary in the sense that 
$e^{ijkl}$ appears in $*\varphi$ if any only if there exists unique monomial $e^{rst}$ in $\varphi$
such that $\{r,s,t,i,j,k,l\} = \{1,2,3,4,5,6,7\}$. Also, it can be checked directly on the standard orthogonal basis.
Now, we define a new 3-form $\chi(u,v,w)$ by the identity $\langle \chi ( u,v,w), z \rangle = *\varphi(u,v,w,z)$.

An important consequence of these definitions and the above discussion (specifically, the equation (\ref{A:nice})) is that 
if a 3-plane spanned by $x,y,z\in \II$ generates a quaternion subalgebra, then $[x,y,z]=0$, which implies $\chi(x,y,z) = 0$. 
We record this in our next lemma:
\begin{Lemma}\label{L:chi vanishes}
If the 3-plane $W$ generated by $x,y,z\in \II$ is associative, then $\chi(x,y,z)=0$.  
\end{Lemma}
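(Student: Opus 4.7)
The plan is to reduce the claim to the Corollary above, which says associativity of $W$ forces $[x,y,z]=0$ for an orthonormal basis of $W$. So the whole task is to identify $\chi$ on $\II\times\II\times\II$ with the associator $[\cdot,\cdot,\cdot]$, and then bootstrap from orthonormal bases to arbitrary ones.

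First I would unpack the definition of $\chi$. For $x,y,z\in\II$, the vector $\chi(x,y,z)$ is the unique element of $\II$ satisfying $\langle\chi(x,y,z),w\rangle = \langle x\times y\times z,w\rangle$ for every $w\in\II$. In other words, $\chi(x,y,z)$ is the orthogonal projection of the triple-cross product $x\times y\times z\in\OO$ onto $\II=e^\perp$.

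Second, I would invoke the associator identity (\ref{A:ass 1}) of Theorem \ref{T:associator identity}: for all $x,y,z\in\II$,
\begin{align*}
x\times y\times z = \varphi(x,y,z)\,e + [x,y,z],
\end{align*}
with $[x,y,z]\in\II$. Since $e\perp\II$, the right-hand side is already a decomposition into the $\C e$ and $\II$ components, so the projection onto $\II$ is exactly the associator. Combined with the previous paragraph, this gives the key identity
\begin{align*}
\chi(x,y,z) = [x,y,z] \quad\text{for all } x,y,z\in\II.
\end{align*}

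Third, I would finish as follows. Suppose $W$ is associative and is spanned by $x,y,z\in\II$. Pick any orthonormal basis $x',y',z'$ of $W$ (which exists by Gram--Schmidt). The Corollary following Theorem \ref{T:associator identity} gives $[x',y',z']=0$. The associator $[\cdot,\cdot,\cdot]$ is trilinear and alternating, so its restriction to $W$ is a trilinear alternating form on a $3$-dimensional space, hence a scalar multiple of the determinant in the basis $(x',y',z')$; vanishing on one basis forces vanishing on every triple from $W$. In particular $[x,y,z]=0$, and by the identity above $\chi(x,y,z)=0$. There is no serious obstacle: the only non-routine step is recognizing that (\ref{A:ass 1}) is precisely the decomposition of $x\times y\times z$ into its $e$- and $\II$-parts, so that $\chi$ coincides with the associator on $\II$.
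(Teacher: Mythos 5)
Your proposal is correct and follows essentially the same route as the paper: the text preceding the lemma derives it by combining the associator identity (\ref{A:ass 1}) (equivalently (\ref{A:nice2})) with the corollary that associativity forces $[x,y,z]=0$, exactly as you do. Your write-up is in fact a bit more careful than the paper's, since you make explicit that $\chi$ is the $\II$-component of the triple-cross product and that the alternating trilinearity of the associator lets you pass from an orthonormal basis to an arbitrary spanning triple.
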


The vanishing locus of $\chi$ on $\PP(\bigwedge^3 \II)$ can be made more precise since
\begin{align*}
\chi &= (e^{247} -e^{256} - e^{346} - e^{357})e_1 \\ 
&+ ( e^{156} - e^{147} + e^{345} - e^{367})e_2 \\
&+ (-e^{245} + e^{267} + e^{146} + e^{157})e_3 \\
&+ (e^{567} + e^{127} - e^{136} + e^{235})e_4\\
&+ (-e^{126} - e^{467} - e^{137} - e^{234})e_5\\ 
&+ (e^{457} +e^{125} +e^{134} - e^{237} )e_6\\
&+ (e^{135} - e^{124} - e^{456} + e^{236})e_7,
\end{align*}
which follows from (\ref{A:4-form}). 

\begin{Remark}
It appears that the idea of using these seven linear equations obtained from $\chi$ to study associative manifolds is first used in \cite{AS10}.
\end{Remark}

\begin{Remark}\label{R:already}
%\item {\em The homogenous coordinate ring}:

Let $p_I$ ($I$ is a $d$-element subset of $\{1,\dots, n\}$) 
denote the (Pl\"ucker) coordinates on $\PP(\bigwedge^d \C^n)$.
The homogenous coordinate ring of the Grassmann 
variety of $d$ dimensional subspaces in $\C^n$ 
is the quotient of the polynomial ring 
$\C[p_I:\ I\subset \{1,\dots, n\}, \ |I|=d]$ by the ideal g
enerated by the following quadratic polynomials: 
$\sum_{s=1}^{d+1} (-1)^s p_{i_1 i_2 \dots i_{d-1} j_s } p_{j_1 j_2 \dots \widehat{j_s} \dots j_{d+1}}$,
where $i_1,\dots, i_{d-1},j_1,\dots, j_{d+1}$ are arbitrary numbers from 
$\{1,\dots, n\}$. Here, the hatted entry $\widehat{j_s}$ is omitted from the sequence. 
Of course, the case of interest for us is when $d=3$, $n=7$, and (\ref{A:Plucker relations}) is a (at most) 4-term relation:
\begin{align}\label{A:Plucker relations}
p_{i_1i_2 j_1} p_{j_2j_3 j_4} = p_{i_1i_2 j_2} p_{j_1j_3 j_4} -p_{i_1i_2 j_3} p_{j_1j_2 j_4} +  p_{i_1i_2 j_4} p_{j_1j_2j_3}.
\end{align}
In this notation, the vanishing locus of $\chi$ on $\PP(\bigwedge^3 \II)$ 
can be expressed in Pl\"ucker coordinates on $\PP(\bigwedge^d \C^n)$ by the following 7 linear equations: 
\begin{align}
p_{247} - p_{256} - p_{346} - p_{357} &= 0 \label{A:linear equations 1}\\ 
p_{156}  - p_{147} + p_{345} - p_{367} &= 0 \label{A:linear equations 2}\\ 
-p_{245} + p_{267} + p_{146} + p_{157} &= 0 \label{A:linear equations 3}\\ 
p_{567} + p_{127} - p_{136} + p_{235} &= 0 \label{A:linear equations 4}\\ 
-p_{126} - p_{467} - p_{137} - p_{234} &= 0 \label{A:linear equations 5}\\ 
p_{457} + p_{125} + p_{134} - p_{237} &= 0 \label{A:linear equations 6}\\ 
p_{135} - p_{124} - p_{456} + p_{236} &= 0 \label{A:linear equations 7}.
\end{align}
Moreover, it follows from Lemma~\ref{L:chi vanishes} that the Zariski closure of 
the space of associative 3-planes in $\PP(\bigwedge^3 \II)$, 
namely the image of the associative grassmannian $\mt{G}_2/\mt{SO}_4$ under the Pl\"ucker embedding of $\mt{Gr} (3,\II)$ 
lies in the intersection of these 7 hyperplanes with $\mt{Gr}(3,\II)$.
\end{Remark}

\begin{Definition}
We denote by $X_{min}$ the intersection in $\PP(\bigwedge^3 \II)$ of the 7 hyperplanes 
(\ref{A:linear equations 1})--(\ref{A:linear equations 7}) with the grassmannian $\mt{Gr} (3,\II)$.
\end{Definition}

\section{Two $\mt{SL}_2$ actions}\label{S:two SLs}

A 2-dimensional maximal torus $T$ of $\mt{G}_2$ is described by Springer and Veldkamp in Section 2.3 of~\cite{SV}
as the subgroup of automorphisms of $\OO$ consisting of the following transformations:
\begin{align*}
t_{\lambda,\mu}:\ (x,y) \mapsto (c_\lambda x c_\lambda^{-1}, c_\mu y c_\lambda^{-1}),
\end{align*}
where $(x,y)\in \OO$, $\lambda,\mu \in k^*$ and $c_\lambda,c_\mu$ are the diagonal matrices $\mt{diag}(\lambda, \lambda^{-1})$,
$\mt{diag}(\mu,\mu^{-1})$, respectively.

We look more closely at how $T$ acts on the grassmannian, so we express the action in our coordinates. 
Let $e_{ij}$ denote the elementary $2\times 2$ matrix which has 1 at $i,j$'th position and 0's elsewhere. 
The set of pairs $\{ (e_{11},0),(e_{12},0),(e_{21},0),(e_{22},0),(0,e_{11}),(0,e_{12}),(0,e_{21}),(0,e_{22}) \}$ forms a basis for $\OO$.
In this basis, $t_{\lambda,\mu}$ is the diagonal matrix
$$
t_{\lambda,\mu} = \mt{diag}(1,\lambda^2, \lambda^{-2}, 1, \lambda^{-1}\mu, \lambda \mu, \lambda^{-1} \mu^{-1}, \lambda \mu^{-1} ).
$$
We are going to switch to the basis 
$\{e=(e_{11}+e_{22},0), e_1 = (\ii,0),e_2=(\jj,0),e_3=(\kk,0),e_4=(0,e_{11}+e_{22}),e_5=(0,\ii),e_6 =(0,\jj),e_7=(0,\kk)\}$.
The proof of the next lemma is straightforward so we skip it.
\begin{Lemma}\label{L:torus action on e basis}
The action of maximal torus $T=t_{\lambda,\mu}$ of $\mt{G}_2$ on the basis $\{e_1,\dots, e_7\}$ of $\II$ is given by 
\begin{align*}
t_{\lambda ,\mu} (e) &= e\\
t_{\lambda ,\mu} (e_1) &= e_1 \\
t_{\lambda ,\mu} (e_2) &=  \left( \frac{\lambda^2 + \lambda^{-2}}{2} \right) e_2 + i\left( \frac{-\lambda^2 + \lambda^{-2}}{2} \right) e_3 \\  
t_{\lambda ,\mu} (e_3) &=  i\left( \frac{\lambda^2 - \lambda^{-2}}{2} \right) e_2 + \left( \frac{\lambda^2 + \lambda^{-2}}{2} \right) e_3 \\  
t_{\lambda ,\mu} (e_4) &= \left(\frac{ \lambda^{-1} \mu + \lambda \mu^{-1}}{2} \right) e_4 +i\left(\frac{ -\lambda^{-1} \mu + \lambda \mu^{-1}}{2} \right) e_5 \\ 
t_{\lambda ,\mu} (e_5) &= i\left(\frac{ \lambda^{-1} \mu - \lambda \mu^{-1}}{2} \right) e_4 +\left(\frac{ \lambda^{-1} \mu + \lambda \mu^{-1}}{2} \right) e_5 \\ 
t_{\lambda ,\mu} (e_6) &= \left(\frac{ \lambda \mu + \lambda^{-1} \mu^{-1}}{2} \right) e_6 + i \left(\frac{- \lambda \mu + \lambda^{-1} \mu^{-1}}{2} \right) e_7 \\ 
t_{\lambda ,\mu} (e_7) &= i\left(\frac{ \lambda \mu - \lambda^{-1} \mu^{-1}}{2} \right) e_6 +\left(\frac{ \lambda \mu + \lambda^{-1} \mu^{-1}}{2} \right) e_7.
\end{align*}
\end{Lemma}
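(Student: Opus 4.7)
The plan is to verify the formulas by a direct matrix computation in $\DD = \mt{Mat}_2(\C)$, slot by slot, using the identifications of $1, \ii, \jj, \kk$ with the specific $2\times 2$ matrices fixed earlier. Writing $c_\lambda = \mt{diag}(\lambda, \lambda^{-1})$, the first slot of $t_{\lambda, \mu}$ is the inner automorphism $x \mapsto c_\lambda x c_\lambda^{-1}$ of $\DD$, while the second slot is the non-conjugation linear map $y \mapsto c_\mu y c_\lambda^{-1}$.

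For the first slot, a diagonal-matrix conjugation fixes every diagonal matrix and scales the $(1,2)$- and $(2,1)$-entries of an anti-diagonal matrix by $\lambda^2$ and $\lambda^{-2}$ respectively. Since $1$ and $\ii$ are diagonal, this immediately gives $t_{\lambda, \mu}(e) = e$ and $t_{\lambda, \mu}(e_1) = e_1$, and reduces the computation of $t_{\lambda, \mu}(e_2)$ and $t_{\lambda, \mu}(e_3)$ to expressing the explicit anti-diagonal matrices $c_\lambda \jj c_\lambda^{-1}$ and $c_\lambda \kk c_\lambda^{-1}$ as $\C$-linear combinations of $\jj$ and $\kk$. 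In each case the decomposition $\alpha \jj + \beta \kk$ amounts to solving a $2\times 2$ linear system obtained by matching the $(1,2)$- and $(2,1)$-entries (which involve $\alpha + i\beta$ and $-\alpha + i\beta$ respectively), and the resulting $\alpha,\beta$ are exactly the coefficients claimed in the lemma.

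For the second slot I apply the same strategy to $c_\mu y c_\lambda^{-1}$ for $y \in \{1, \ii, \jj, \kk\}$. When $y$ is diagonal the product is diagonal, and I decompose it in the basis $\{1, \ii\}$ by matching the $(1,1)$- and $(2,2)$-entries with $\alpha + i\beta$ and $\alpha - i\beta$; applied to $y = 1$ this produces $\mt{diag}(\lambda^{-1}\mu, \lambda\mu^{-1})$ and applied to $y = \ii$ it produces $\mt{diag}(i\lambda^{-1}\mu, -i\lambda\mu^{-1})$, yielding the stated formulas for $t_{\lambda, \mu}(e_4)$ and $t_{\lambda, \mu}(e_5)$. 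When $y \in \{\jj, \kk\}$ the product is anti-diagonal with off-diagonal entries scaled by $\lambda\mu$ and $\lambda^{-1}\mu^{-1}$, and the same decomposition procedure in the basis $\{\jj, \kk\}$ reproduces the formulas for $t_{\lambda, \mu}(e_6)$ and $t_{\lambda, \mu}(e_7)$.

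I do not expect any real obstacle: the lemma is a finite bookkeeping exercise, and the only point requiring care is keeping the two slots straight, since the first is a genuine inner automorphism of $\DD$ (and hence fixes $1$ and $\ii$), whereas the second is not and scales every diagonal matrix nontrivially. This is precisely why $e$ and $e_1$ are $T$-fixed while $e_4$ and $e_5$ are not.
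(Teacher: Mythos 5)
Your computation is correct: I checked each of the eight formulas against the matrix products $c_\lambda x c_\lambda^{-1}$ and $c_\mu y c_\lambda^{-1}$ and the coefficients all come out as stated. The paper declares this proof "straightforward" and skips it, but the route it indicates (diagonalizing $t_{\lambda,\mu}$ in the $e_{ij}$ basis and then changing to the $1,\ii,\jj,\kk$ basis) is exactly the bookkeeping you carry out, so your argument fills in the omitted proof by essentially the same approach.
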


There are two $\mt{SL}_2$'s naturally associated with the tori $t_{\lambda,\lambda}$ and $t_{\mt{id}_2,\mu}$.

%\newpage
\begin{Proposition}\label{P:SL2 action}
Let $x=(x_1,x_2)$ be an octonion from $\II = \mf{sl}_2 \oplus \mt{Mat}_2$. 
The two $\mt{SL}_2$ actions on $\II$ defined by 
\begin{enumerate}
\item $g\cdot x = (g x_1g^{-1}, gx_2 g^{-1})$ and 
\item $g\cdot x = (x_1, gx_2)$ 
\end{enumerate}
induce $\mt{SL}_2$ actions on associative 3-planes. 
\end{Proposition}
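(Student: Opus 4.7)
The plan is to show each of the two $\mt{SL}_2$-actions on $\II$ is the restriction of an automorphism of $\OO$, i.e.\ factors through a homomorphism $\mt{SL}_2 \to \mt{Aut}(\OO)=\mt{G}_2$. Once this is established, the assertion is immediate: by Corollary 2.2.4 of~\cite{SV} (invoked via Lemma~\ref{L:fixed quaternions}), $\mt{G}_2$ permutes the quaternion subalgebras of $\OO$, and a 3-plane $W\subset \II$ is associative exactly when $A(W)$ is such a subalgebra; hence the induced action on $\mt{Gr}(3,\II)$ preserves the associative locus. Preservation of $\II$ itself is clear because both extensions defined below fix the identity $(I,0)\in \OO$.

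For action $(1)$, for $g\in \mt{SL}_2$ define $\Phi_g : \OO \to \OO$ on all of $\OO = \DD\oplus \DD$ by the same formula $\Phi_g(a,b)=(gag^{-1}, gbg^{-1})$. It is linear, fixes the identity, and satisfies $\Phi_{g_1g_2}=\Phi_{g_1}\circ\Phi_{g_2}$. Recall that quaternion conjugation on $\DD=\mt{Mat}_2$ is $\bar{x}=\mt{tr}(x)I-x$; because conjugation by $g$ preserves trace, $\overline{gxg^{-1}}=g\bar{x}g^{-1}$. Expanding with the Cayley--Dickson formula~(\ref{A:octonion multiplication}) yields
\begin{equation*}
\Phi_g(a,b)\cdot\Phi_g(c,d) \;=\; \bigl(gac g^{-1}+g\bar{d}b\, g^{-1},\; gda\, g^{-1}+gb\bar{c}\, g^{-1}\bigr) \;=\; \Phi_g\bigl((a,b)(c,d)\bigr),
\end{equation*}
so $\Phi_g\in \mt{Aut}(\OO)=\mt{G}_2$.

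For action $(2)$, define $\Psi_g(a,b)=(a, gb)$ on all of $\OO$. The hypothesis $g\in \mt{SL}_2$ enters crucially here: since the norm on $\DD$ is $\det$, the condition $\det g=1$ is exactly $\bar{g}g=g\bar{g}=I$, i.e.\ $\bar{g}=g^{-1}$. Therefore $\overline{gd}=\bar{d}\bar{g}=\bar{d}g^{-1}$, and another application of~(\ref{A:octonion multiplication}) gives
\begin{equation*}
(a,gb)(c,gd) \;=\; \bigl(ac + \overline{gd}\cdot gb,\; gd\cdot a + gb\cdot \bar{c}\bigr) \;=\; \bigl(ac+\bar{d}b,\; g(da+b\bar{c})\bigr) \;=\; \Psi_g\bigl((a,b)(c,d)\bigr),
\end{equation*}
so $\Psi_g$ is again an octonion algebra automorphism.

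The only real work is the Cayley--Dickson bookkeeping, and the main subtlety is identifying where the hypothesis on $g$ is used in each case: trace-preservation of inner conjugation in $(1)$, and the identity $\bar{g}=g^{-1}$ (equivalent to $\det g = 1$) in $(2)$. With both $\Phi$ and $\Psi$ landing in $\mt{G}_2$, the proposition reduces to the already-established fact that $\mt{G}_2$ acts on the set of quaternion subalgebras, hence on the associative 3-planes in $\II$.
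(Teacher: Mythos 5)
Your proof is correct, and the computational heart of it --- verifying $\Phi_g\bigl((a,b)(c,d)\bigr)=\Phi_g(a,b)\,\Phi_g(c,d)$ and likewise for $\Psi_g$ via the Cayley--Dickson formula~(\ref{A:octonion multiplication}), using $\overline{gxg^{-1}}=g\bar x g^{-1}$ in the first case and $\bar g=g^{-1}$ for $\det g=1$ in the second --- is exactly the multiplicativity check $(g\cdot x)(g\cdot y)=g\cdot(xy)$ that the paper carries out. Where you diverge is in how you conclude. The paper stays inside $\II$, applies the multiplicativity to the associator to get $[g\cdot x,g\cdot y,g\cdot z]=g\cdot[x,y,z]=0$, and invokes the criterion that an orthonormally-spanned $W$ is associative iff its associator vanishes (a criterion the paper itself only verifies by Maple). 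You instead extend each map to all of $\OO$, observe that it is a unital algebra automorphism and hence an element of $\mt{G}_2=\mt{Aut}(\OO)$, and then the statement follows from the definition of associativity ($W=D\cap\II$ for a quaternion subalgebra $D$) together with the fact that automorphisms permute quaternion subalgebras, already used in Lemma~\ref{L:fixed quaternions}. Your packaging buys slightly more: it exhibits the two $\mt{SL}_2$'s as explicit subgroups of $\mt{G}_2$ (so they act on all of $\mt{Gr}(3,\II)$, not just on the associative locus) and it bypasses the Maple-verified associator criterion entirely; the cost is reliance on $\mt{Aut}(\OO)=\mt{G}_2$, which the paper has in any case from Theorem 2.3.3 of~\cite{SV}. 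One small point of care: you correctly flag that the identity $(I,0)$ is fixed, which is what guarantees $\II=e^\perp$ is preserved; without that remark the restriction to $\II$ would not be justified.
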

\begin{proof}
Let $W \in \mt{Gr}(3,\II)$ be an associative 3-plane spanned by the orthogonal basis 
$\{x,y,z\} \subset \II = \mf{sl}_2 \oplus \mt{Mat}_2$.
We know from Lemma~\ref{L:chi vanishes} that $W$ is associative if $[x,y,z] = 0$. 
Thus, it suffices to check the vanishing of the associator
$$
[g\cdot x, g\cdot y, g \cdot z] = (g\cdot x \, g\cdot y) g\cdot z -g \cdot x (g \cdot y\,  g \cdot z).
$$
Note that $\overline{g} = g^{-1}$ for all $g\in \mt{SL}_2$. 
Note also that for any $x=(x_1,x_2),y=(y_1,y_2)$ from $\II = \mf{sl}_2 \oplus \mt{Mat}_2$ we have 
\begin{align*}
(g\cdot x)(g\cdot y) &= (g x_1 y_1g^{-1} + \overline{g y_2g^{-1} }g x_2 g^{-1}, gy_2x_1g^{-1} + gx_2g^{-1}\overline{g y_1 g^{-1}})\\ 
&= (g x_1 y_1g^{-1} + g \overline{y_2} g^{-1}g x_2 g^{-1}, gy_2x_1g^{-1} + gx_2g^{-1}g \overline{y_1} g^{-1})\\
&= (g x_1 y_1g^{-1} + g \overline{y_2} x_2 g^{-1}, gy_2x_1g^{-1} + gx_2\overline{y_1} g^{-1})\\
&= g \cdot ( (x_1,x_2) (y_1,y_2) ). 
\end{align*}
Therefore, if $[x,y,z]=0$, then 
\begin{align*}
[g\cdot x, g\cdot y, g \cdot z] = (g\cdot x \, g\cdot y) g\cdot z -g \cdot x (g \cdot y\,  g \cdot z) &= (g \cdot (xy)) g\cdot z - g\cdot x (g\cdot (yz))\\
&= g \cdot ((xy)z) - g\cdot (x (yz))\\
&= g \cdot ((xy)z - x (yz)) \\
&= g \cdot [x,y,z]\\
&=0.
\end{align*}
Next, we check our claim for the second action:
\begin{align*}
(g\cdot x)(g\cdot y) &= (x_1 y_1 + \overline{g y_2}g x_2, gy_2x_1 + gx_2\overline{ y_1})\\ 
&= (x_1 y_1 + \overline{y_2} x_2, g(y_2x_1 + x_2\overline{ y_1}))\\ 
&= g \cdot ( (x_1,x_2) (y_1,y_2) ). 
\end{align*}
The rest follows as in the previous case.
\end{proof}

As a consequence of Proposition~\ref{P:SL2 action} we obtain two $\mt{SL}_2$ actions on 
$X_{min}$.
%(Our choice of notation $X_{min}$ for the Zariski closure in $\mt{Gr}(3,\II)$ of associative 3-planes will be explained in the Appendix.)

\begin{Remark}

We denote by $U$ the following unipotent subgroup:
$$
U = \left\{
\begin{pmatrix}
1& u \\
0 &1
\end{pmatrix}:\ u\in \C 
\right\} \subset \mt{SL}_2.
$$
The matrices of the actions of a generic element $g_u:=\begin{pmatrix}
1& u \\
0 &1
\end{pmatrix}\in U$
on the ordered basis $e_1,\dots, e_7$ of $\II$ are given by 
\begin{enumerate}
\item $
[g_u]=
\begin{pmatrix} 
1&-iu&-u&0&0&0&0\\ 
iu&1/2\,{u}^{2}+1&-i/2{u}^{2}&0&0&0&0\\ 
u&-i/2{u}^{2} &1-1/2\,{u}^{2}&0&0&0&0\\ 
0&0&0&1&0&0&0\\ 
0&0&0&0&1&-iu&-u\\ 
0&0&0&0&iu&1/2\,{u}^{2}+1&-i/2{u}^{2}\\
0&0&0&0&u&-i/2{u}^{2}&1-1/2\,{u}^{2}
\end{pmatrix}
$
\item 
$[g_u]=\begin{pmatrix} 
1&0&0&0&0&0&0\\ 
0&1&0&0&0&0&0\\ 
0&0&1&0&0&0&0\\ 
0&0&0&1&0& u/2&-i/2u\\
0&0&0&0 &1&-i/2u&-u/2\\ 
0&0&0&-u/2&i/2u&1&0\\ 
0&0&0&i/2u&u/2&0&1
\end{pmatrix}.
$
\end{enumerate}
For both of these actions of $U$ on $X_{min}$ the fixed point sets are positive dimensional. 
Indeed, the points $[e_{123}]$ and $[-e_{{126}}+ie_{{127}}+ie_{{136}}+e_{{137}}]$ of $\PP(\bigwedge^3 \II)$
lie on $X_{min}$ (this can be verified by using eqs. (\ref{A:linear equations 1})--(\ref{A:linear equations 7})) and both of these points  
are fixed by the first action of $U$. By a result of Horrocks~\cite{H}, we know that the fixed point set of a unipotent group acting on a connected complete 
variety is connected. Therefore, the fixed point set of $U$ on $X_{min}$ is positive dimensional for the first action.
Similarly, the points $[e_{123}]$ and $[-e_{{346}}+ie_{{347}}-ie_{{356}}+e_{{357}}]$ of $X_{min}$ are fixed by the second action of $U$,
hence the fixed point set of this action is also positive dimensional.

\end{Remark}

%\newpage

\section{Torus fixed points}\label{S:Torus fixed points}

Now we go back to analyzing fixed point set of the maximal torus $t_{\lambda,\mu}$ of $\mt{G}_2$ on $X_{min}$. 
The action of $t_{\lambda,\mu}$ on the basis $\{e_1,\dots, e_7\}$ is computed in the previous section.
The eigenvalues are $1, \frac{1}{\lambda^2},\lambda^2,\frac{\lambda}{\mu},\frac{\mu}{\lambda},\frac{1}{\lambda \mu},\lambda \mu$ 
and the respective eigenvectors are 
\begin{align*}
\widetilde{e_1}&= e_1,\\
\widetilde{e_2} &= -ie_2 + e_3,\\
\widetilde{e_3}&= ie_2 + e_3,\\
\widetilde{e_4} &= -ie_4 + e_5,\\
\widetilde{e_5} &= ie_4 + e_5,\\
\widetilde{e_6} &= -ie_6+e_7,\\
\widetilde{e_7} &= ie_6+e_7.
\end{align*}

For $i,j$ and $k$ from $\{1,\dots, 7\}$ we write $\wil{e}_{i j k}$ for $\wil{e}_i \wedge \wil{e}_j \wedge \wil{e}_k$. 
Accordingly, we write $\wil{p}_{ijk}$ for the transformed Pl\"ucker coordinate functions so that 
$$
\wil{p}_{ijk} (\wil{e}_{rst}) = 
\begin{cases}
1 & \text{ if } i =r,\ j= s,\ k=t; \\
0 & \text{ otherwise.}
\end{cases}
$$
Our defining equations (\ref{A:linear equations 1})--(\ref{A:linear equations 7}) become: 
\begin{align}
\wil{f}_1 &:= \wil{p}_{{247}} + \wil{p}_{{356}} = 0 \label{A:new 7a}  \\
\wil{f}_2 &:= 2\wil{p}_{{147}}+2\wil{p}_{{156}}+\wil{p}_{{245}}+\wil{p}_{{345}}-\wil{p}_{{267}}-\wil{p}_{{367}} = 0 \label{A:new 7b}\\
\wil{f}_3 &:= 2\wil{p}_{{147}}+2\wil{p}_{{156}}+\wil{p}_{{245}}-\wil{p}_{{345}}-\wil{p}_{{267}}+\wil{p}_{{367}} = 0 \label{A:new 7c}\\
\wil{f}_4 &:= 2\wil{p}_{{127}}-2\wil{p}_{{136}}+\wil{p}_{{234}}+\wil{p}_{{235}}+\wil{p}_{{467}}+\wil{p}_{{567}} = 0 \label{A:new 7d}\\
\wil{f}_5 &:= -2\wil{p}_{{127}}-2\wil{p}_{{136}}+\wil{p}_{{234}}-\wil{p}_{{235}}+\wil{p}_{{467}}-\wil{p}_{{567}} = 0 \label{A:new 7f}\\
\wil{f}_6 &:= 2\wil{p}_{{124}}-2\wil{p}_{{135}}-\wil{p}_{{236}}-\wil{p}_{{237}}+\wil{p}_{{456}}+\wil{p}_{{457}}  = 0 \label{A:new 7g}\\
\wil{f}_7 &:= 2\wil{p}_{{124}}+2\wil{p}_{{135}}-\wil{p}_{{236}}+\wil{p}_{{237}}+\wil{p}_{{456}}-\wil{p}_{{457}} = 0 \label{A:new 7h}.
\end{align}

It is easily verified that the following 35 vectors are eigenvectors for the action of $t_{\lambda,\mu}$ on $\bigwedge^3 \II$
(together with the eigenvalues indicated on the left column):
\begin{center}
\begin{tabular}{c|c}
${\frac {1}{{\lambda}^{3}\mu}}$ & $\wil{e}_{{126}}$ 
\\[.1cm]
\hline
${\lambda}^{3}\mu$ & $\wil{e}_{{137}}$  
\\[.1cm]
\hline
${\mu}^{2}$ & $\wil{e}_{{157}}$  
\\[.1cm]
\hline
${\mu}^{-2}$ & $\wil{e}_{{146}}$ 
\\[.1cm]
\hline
${\frac {{\lambda}^{2}}{{\mu}^{2}}}$ & $\wil{e}_{{346}}$ 
\\[.1cm]
\hline
${\frac {{\mu}^{2}}{{\lambda}^{2}}}$ & $\wil{e}_{{257}}$ 
\\[.1cm]
\hline
${\frac {{\lambda}^{3}}{\mu}}$ & $\wil{e}_{{134}}$ 
\\[.1cm]
\hline
${\frac {\mu}{{\lambda}^{3}}}$ & $\wil{e}_{{125}} $ 
\\[.1cm]
\hline
${\frac {1}{{\lambda}^{2}{\mu}^{2}}}$ & $\wil{e}_{{246}}$ 
\\[.1cm]
\hline
$ {\lambda}^{2}{\mu}^{2} $ & $\wil{e}_{{357}} $ 
\\[.1cm]
\hline
${\lambda}^{4} $ & $\wil{e}_{{347}}$  
\\[.1cm]
\hline
${\lambda}^{-4}$ & $\wil{e}_{{256}}$
\\[.1cm]
\hline
${\frac {\mu}{\lambda}}$ & $\wil{e}_{{567}},\ \wil{e}_{{235}},\ \wil{e}_{{127}}$ 
\\[.1cm]
\hline
${\frac {\lambda}{\mu}}$ & $\wil{e}_{{467}}, \ \wil{e}_{{234}},\ \wil{e}_{{136}}$ 
\\[.1cm]
\hline
${\frac {1}{\lambda\,\mu}}$ & $ \wil{e}_{{456}},\ \wil{e}_{{236}} ,\ \wil{e}_{{124}}$
\\[.1cm]
\hline
$\lambda \mu$ & $\wil{e}_{{457}} ,\ \wil{e}_{{237}},\ \wil{e}_{{135}}$ 
\\[.1cm]
\hline
$\frac{1}{\lambda^{2}} $ & $\wil{e}_{{267}} , \ \wil{e}_{{245}} ,\ \wil{e}_{{156}}$
\\[.1cm]
\hline
$ {\lambda}^{2}$ & $\wil{e}_{{367}}, \ \wil{e}_{{345}} ,\ \wil{e}_{{147}}$ 
\\[.1cm]
\hline
$1$ & $\wil{e}_{{356}},\  \wil{e}_{{247}},\ \wil{e}_{{167}},\ \wil{e}_{{145}},\ \wil{e}_{{123}}$ 
\end{tabular}
\end{center}

\begin{Theorem}\label{T:torus fixed points}
Among the eigenvectors of $t_{\lambda,\mu}$ in $\bigwedge^3 \II$, only the images of the following vectors in $\PP(\bigwedge^3 \II)$ 
lie in $X_{min}$: 
\begin{center}
\begin{tabular}{c|c}
${\lambda}^{2}{\mu}^{2}$ & $\wil{e}_{{357}}$ 
\\[.1cm]
\hline
${\frac {1}{{\lambda}^{2}{\mu}^{2}}}$ & $ \wil{e}_{{246}}$
\\[.1cm]
\hline
${\frac {1}{{\lambda}^{3}\mu}}$ & $\wil{e}_{{126}}$
\\[.1cm]
\hline
${\lambda}^{3}\mu$ & $\wil{e}_{{137}}$
\\[.1cm]
\hline
${\mu}^{2}$ & $\wil{e}_{{157}}$
\\[.1cm]
\hline
$\frac{1}{{\mu}^{2}}$ & $\wil{e}_{{146}}$ 
\\[.1cm]
\hline
$\frac{1}{\lambda^{4}}$ & $\wil{e}_{{256}}$
\\[.1cm]
\hline
${\lambda}^{4}$ & $\wil{e}_{{347}}$ 
\\[.1cm]
\hline
${\frac {\mu}{{\lambda}^{3}}}$ & $\wil{e}_{{125}}$ 
\\[.1cm]
\hline
${\frac {{\lambda}^{3}}{\mu}}$ & $\wil{e}_{{134}}$
\\[.1cm]
\hline
${\frac {{\mu}^{2}}{{\lambda}^{2}}}$ & $\wil{e}_{{257}}$ 
\\[.1cm]
\hline
${\frac {{\lambda}^{2}}{{\mu}^{2}}}$ & $\wil{e}_{{346}}$
\\[.1cm]
\hline
$1$ & $\wil{e}_{{167}},\ \wil{e}_{{145}},\ \wil{e}_{{123}}$
\end{tabular}
\end{center}
\end{Theorem}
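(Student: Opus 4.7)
The plan is to exploit that $X_{min}$ is closed and $T$-invariant, so every $T$-fixed point lies in the projectivization of some $T$-weight space of $\bigwedge^3 \II$ listed in the table; a point $[v]$ belongs to $X_{min}$ precisely when $v$ is decomposable (equivalently, satisfies the Pl\"ucker relations~(\ref{A:Plucker relations})) and also satisfies the seven linear equations~(\ref{A:new 7a})--(\ref{A:new 7h}). I would split the analysis by weight-space dimension: twelve one-dimensional weight spaces, six three-dimensional weight spaces, and one five-dimensional (trivial character) weight space.

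For the twelve one-dimensional weight spaces, each spanned by a single monomial $\wil{e}_{ijk}$, decomposability is automatic; inspection shows that the set of index triples $ijk$ labelling these twelve basis vectors is disjoint from the set of triples appearing with nonzero coefficient in~(\ref{A:new 7a})--(\ref{A:new 7h}), so each of the twelve $[\wil{e}_{ijk}]$ is a $T$-fixed point of $X_{min}$.

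For each of the six three-dimensional weight spaces, I write a general vector as $x = a_1 v_1 + a_2 v_2 + a_3 v_3$ with the $v_i$ as in the table. Substituting into~(\ref{A:new 7a})--(\ref{A:new 7h}) yields a single nontrivial linear relation among $a_1, a_2, a_3$ whose coefficients are $\pm 1, \pm 1, \pm 2$ (all nonzero). For each pair $(i,j)$ one then exhibits a \emph{degenerate} instance of~(\ref{A:Plucker relations})---one whose two index sequences $i_1 i_2$ and $j_1 \ldots j_4$ share a repeated index---in which the only surviving term on $x$ is the product of a Pl\"ucker coordinate from $v_i$ with one from $v_j$; this gives $a_i a_j = 0$. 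The three resulting constraints $a_1 a_2 = a_1 a_3 = a_2 a_3 = 0$ imply that at most one $a_i$ is nonzero, and combined with the linear relation this forces $x = 0$; hence no new $T$-fixed points arise from these six weight spaces.

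The five-dimensional trivial weight space, spanned by $\wil{e}_{247}, \wil{e}_{356}, \wil{e}_{167}, \wil{e}_{145}, \wil{e}_{123}$ with coefficients $a_1,\dots,a_5$, is the most intricate. Only~(\ref{A:new 7a}) gives a nontrivial linear constraint, $a_1 + a_2 = 0$. The Pl\"ucker relation indexed by $i_1 i_2 = 24$, $j_1\ldots j_4 = 3,5,6,7$ evaluates on $x$ to $-\wil{p}_{246}\wil{p}_{357} + \wil{p}_{247}\wil{p}_{356} = a_1 a_2 = 0$, which combined with $a_1 + a_2 = 0$ forces $a_1 = a_2 = 0$. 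The remaining vector factors as $\wil{e}_1 \wedge (a_3 \wil{e}_6 \wedge \wil{e}_7 + a_4 \wil{e}_4 \wedge \wil{e}_5 + a_5 \wil{e}_2 \wedge \wil{e}_3)$, and three more degenerate Pl\"ucker relations (pairing $\wil{p}_{145}\wil{p}_{167}$, $\wil{p}_{123}\wil{p}_{167}$, and $\wil{p}_{123}\wil{p}_{145}$ respectively) force $a_3 a_4 = a_3 a_5 = a_4 a_5 = 0$, so exactly one of $a_3, a_4, a_5$ can be nonzero. This produces precisely $[\wil{e}_{123}], [\wil{e}_{145}], [\wil{e}_{167}]$ and completes the list. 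The main technical obstacle throughout is that most 4-term Pl\"ucker relations restrict trivially to a given weight space, so for each eigenspace one must carefully identify the short list of degenerate relations that couple the relevant Pl\"ucker coordinates nontrivially.
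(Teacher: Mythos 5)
Your proposal is correct and follows essentially the same route as the paper: check the twelve one\nobreakdash-dimensional weight spaces directly, and for each weight space of dimension $\geq 2$ restrict the seven linear equations (\ref{A:new 7a})--(\ref{A:new 7h}) together with suitably chosen Pl\"ucker relations (\ref{A:Plucker relations}) to rule out everything except $[\wil{e}_{123}],[\wil{e}_{145}],[\wil{e}_{167}]$ in the trivial weight space. In fact you supply a detail the printed proof elides --- the linear equations alone do not give a contradiction on, e.g., the five\nobreakdash-dimensional eigenspace (where $\wil{e}_{145}+\wil{e}_{167}$ satisfies all seven), so the degenerate Pl\"ucker relations you invoke are genuinely needed; the only slip is that on the $\lambda^{2}$ eigenspace the equations $\wil{f}_2,\wil{f}_3$ restrict to two independent linear relations rather than one, which only strengthens your argument.
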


\begin{proof}
It is easily checked that the points that are given in the hypothesis of the theorem are all torus fixed and all of them lie in $X_{min}$.
The only place we have to be careful is that $X_{min}$ may intersect eigenspaces of dimension $\geq 2$. 
Nevertheless, this potential problem does not occur; when we substitute a nontrivial linear combination of eigenvectors 
belonging to the same eigenvalue into equations (\ref{A:new 7a})--(\ref{A:new 7h}), we get a contradiction. 
\end{proof}

\section{Smoothness}

In this section, we will prove our main result. First, we have a remark on the dimensions.

\begin{Remark}\label{R:dimensions}
The dimension of $\mt{G}_2/ \mt{SO}_4$ is equal to 
$\dim \mt{G}_2 - \dim \mt{SO}_4=14-6=8$. 
We already pointed out in Remark~\ref{R:already} that 
$\mt{G}_2 / \mt{SO}_4$ is an affine subvariety of $X_{min}$, 
therefore, the dimension of $X_{min}$ is at least 8.
\end{Remark}

Next, we recall two standard facts.

\begin{enumerate}
\item  {\em Jacobian Criterion for Smoothness}: 
Let $I=(f_1,\dots, f_m)$ be an ideal from $\C[x_1,\dots, x_n]$ and let $x\in V(I)$ be a point from the vanishing locus of $I$ in $\C^n$.
Suppose $d= \dim V(I)$. If the rank of the Jacobian matrix $( \partial f_i/ \partial x_j )_{i=1,\dots, m,\ j=1,\dots, n}$ at $x$ is equal to $n-d$,
then $x$ is a smooth point of $V(I)$.

%Let $p_I$ denote a variable indexed by a sequence $I= i_1i_2\dots i_d$ of elements 
%from $\{1,\dots,n\}$. We assume that if the order of two entries of the sequence $I$ are interchanged, hence we have
%another sequence $I'$ of the same length, then $p_I = - p_{I'}$. 
%In particular, it follows that if two entries of $I$ are equal, then $p_I=0$. 
%We are going to view $p_I$'s as coordinate functions on $\PP(\bigwedge^d \C^n)$.

\item {\em Open charts on the Grassmannian}:
To see the complex manifold structure on $\mt{Gr}(d,\C^n)$, one looks at the intersections of $\mt{Gr}(d,\C^n)$ with the 
standard open charts in $\PP( \wedge^d \C^n)$:
$$
U_I := \mt{Gr}(d,\C^n) \cap \{ x\in \PP(\bigwedge^d \C^n):\ p_I(x) \neq 0\}.
$$
It is not difficult to show that the coordinate functions on $U_I$ are given by 
$p_J/p_I$, where $J=j_1\dots j_d$ is a sequence satisfying $|\{ j_1,\dots, j_d \} \cap \{ i_1,\dots, i_d \}|=d-1$. 
Indeed, it is not difficult to verify (by using Pl\"ucker relations) that any other rational function of the form $p_K/p_I$ is a 
polynomial in $p_J/p_I$'s. 
%These are all well known facts. See for example~\cite{KL}.

\end{enumerate}

\begin{Theorem}\label{T:smooth}
The algebraic set $X_{min}$ is a nonsingular projective variety of dimension 8. 
\end{Theorem}

\begin{proof}

Since $X_{min}$ is a closed set (defined as the intersection of certain hyperplanes with the Grassmann variety)
in a projective space, any irreducible component of $X_{min}$ is a projective variety. 
Moreover, since $X_{min}$ is stable under a torus action, each of these components is stable under the torus action as well.
By Borel Fixed Point Theorem~\cite[Theorem 10.4]{B}, we know that any irreducible component of $X_{min}$ 
contains at least one torus fixed point. In fact, there is a much stronger statement: Let $V$ be a vector space
and let $Y\subset \PP(V)$ be a projective $T$-variety, where $T$ is an algebraic torus.
Finally, let $Y^T$ denote the fixed point set of the torus action. In this case, $Y^T$ contains at least $\dim Y+1$
points. See~\cite[Lemma 2.4]{Carrell}. In Theorem~\ref{T:torus fixed points}, we showed that there are 
in total 15 torus fixed points in $X_{min}$. 
In the next few paragraphs we will show that each of these torus fixed points is smooth and 
its tangent space is 8 dimensional. Hence, each irreducible component of $X_{min}$ is 8 dimensional,
each component has at least 8+1=9 torus fixed points. 
A point in the intersection of two components is necessarily singular in $X_{min}$, hence, 
the Zariski tangent space at such a point would be at least $9$ dimensional. In other words, the irreducible 
components of $X_{min}$ do not intersect each other.
But this implies that there is only one irreducible 
component, otherwise, in $X_{min}$ there would at least be 18 torus fixed points. This finishes the proof of irreducibility. 

We proceed to show that the torus fixed points are smooth.
Note that the existence of a singular point in a $T$-variety implies the existence of a (possibly different) torus fixed singular point.
%If there is a singular point on $X_{min}$, then it has to occur at the torus fixed points. 
Thus, it suffices to analyze neighborhoods of fixed points by using affine charts that are described earlier.

We start with the fixed point $m=[\wil{e}_{123}]$, which lies on the open chart $\wil{U}_{123}$ as its origin. 
Here, ``tilde'' indicates that we are using transformed Pl\"ucker coordinates.
Recall that $X_{min}$ is cut-out on $\wil{U}_{123}$ by the vanishing of the seven linear forms (\ref{A:new 7a})--(\ref{A:new 7h}).
A straightforward calculation shows that the Jacobian of these polynomials with respect to variables
$\wil{q}_{{124}},\wil{q}_{{125}},\wil{q}_{{126}},\wil{q}_{{127}},\wil{q}_{{134}},
\wil{q}_{{135}},\wil{q}_{{136}},\wil{q}_{{137}},\wil{q}_{{234}},\wil{q}_{{235}},\wil{q}_{{236}},\wil{q}_{{237}}$ 
(in the written order) evaluated at the origin (which is $\wil{e}_{123}$) is equal to 
\setcounter{MaxMatrixCols}{12}
\begin{align*}
\mt{Jac}(\wil{f}_1,\dots, \wil{f}_7)|_{\wil{q}_{ijk}=0}=
\begin{pmatrix}
0&0&0&0&0&0&0&0&0&0&0&0 \\
0&0&0&0&0&0&0&0&0&0&0&0\\ 
0&0&0&0&0&0&0&0&0&0&0&0\\ 
0&0&0&1/2&0&0&-1/2&0&1/4&1/4&0&0 \\
0&0&0&-1/2&0&0&-1/2&0&1/4&-1/4&0&0\\
1/2&0&0&0&0&-1/2&0&0&0&0&-1/4&-1/4\\
1/2&0&0&0&0&1/2&0&0&0&0&-1/4&1/4
\end{pmatrix}
\end{align*}
which is obviously of rank 4. Hence, the dimension of the Zariski tangent 
space of $X_{min}$ at $m$ is $12-4=8$ dimensional. 
By Remark~\ref{R:dimensions}, we know that $\dim X_{min} \geq 8$,
hence we have the equality, $\dim X_{min} = 8$. In particular, 
$m$ is a smooth point of $X_{min}$.

We repeat this procedure for the other torus fixed points, which is tedious now. We verified this by using Maple. 
The outcome for each of the torus fixed points that are listed in Theorem~\ref{T:torus fixed points} turns out to be the same. 
In summary, all of the 15 torus fixed points on $X_{min}$ are nonsingular points, therefore, 
$X_{min}$ is a smooth projective variety of dimension 8.
\end{proof}

\begin{proof}[Proof of Theorem~\ref{T:main1:intro}]
The $\mt{G}_2$-orbit $\mt{G}_2/\mt{SO}_4 \subset X_{min}$ is irreducible and its dimension is equal 
to that of $X_{min}$. The proof follows from Theorem~\ref{T:smooth}. 
\end{proof}

%\newpage

\section{Tangent space at $[\wil{e}_{123}]$}\label{S:sample calculation}

In this section we perform a sample calculation of the weights of a generic one-parameter $\gamma: \C^* \rightarrow t_{\lambda,\mu}$ 
subgroup on the tangent space of $X_{min}$ at the $t_{\lambda,\mu}$-fixed point $m=[\wil{e}_{123}] \in X_{min}$. 
We use the term ``generic'' in the algebraic geometric sense, which is equivalent to the statement that 
the pairing between $\gamma$ and any character $\alpha : t_{\lambda,\mu} \rightarrow \C^*$
is nonzero. In other words, we choose a regular one-parameter subgroup $\gamma$ of $t_{\lambda,\mu}$ so that 
the fixed point set of $\gamma$ on $X_{min}$ is the same as that of $t_{\lambda,\mu}$. 
For example, $\gamma(s):= t_{s^{10},s}$, $s\in \C^*$ is regular.

Recall that the tangent space at $p=(p_1,\dots, p_n)$ of an affine variety $V\subseteq \C^n$ defined by the vanishing of the polynomials 
$f_1,\dots,f_r \in \C[x_1,\dots, x_n]$ is the intersection of the hyperplanes
$$
\sum_{i=1}^n \frac{\partial f_j}{\partial x_i} (p) (x_i-p_i) = 0,\ \text{  for } j =1,\dots, r.
$$
(Here we are abusing the notation. 
To be precise, $x_i$ should be replaced by the vector field $\partial/\partial x_i$.)
Equivalently, $T_p V$ is the kernel of the Jacobian matrix of $f_1,\dots,f_r$ (with respect to $x_i$'s) evaluated at the point $p\in V$. 
In our case, $p$ is $m=[\wil{e}_{123}]$ (the origin of the tangent space) and the Jacobian with respect to local coordinates 
$$
\wil{q}_{{124}},\wil{q}_{{125}},\wil{q}_{{126}},\wil{q}_{{127}},\wil{q}_{{134}},
\wil{q}_{{135}},\wil{q}_{{136}},\wil{q}_{{137}},\wil{q}_{{234}},\wil{q}_{{235}},\wil{q}_{{236}},\wil{q}_{{237}}
$$ 
is as given in the proof of Theorem~\ref{T:smooth}. It is straightforward to verify that 
\begin{align}\label{A:basis vectors for 123}
\left\{ -\frac{1}{2}x_{{135}}+x_{{237}},\frac{1}{2}x_{{124}}+x_{{236}},-\frac{1}{2}x_{{127}}+x_{{235}},
\frac{1}{2}x_{{136}}+x_{{234}},x_{{137}},x_{{134}},x_{{126}},x_{{125}} \right\}
\end{align}
is a basis for the kernel of the Jacobian matrix computed in the proof of Theorem~\ref{T:smooth}. 
Here, $x_{ijk}$ stands for the tangent vector $\frac{\partial}{\partial \wil{q}_{ijk}}$.

Recall from Section~\ref{S:two SLs} that $t_{\lambda,\mu}$ acts on $\II$ according to 
\begin{align*}
t_{\lambda,\mu} ( \wil{e}_1 ) &= \wil{e}_1 \\
t_{\lambda,\mu} ( \wil{e}_2 ) &= \frac{1}{\lambda^2}\ \wil{e}_2 \\
t_{\lambda,\mu} ( \wil{e}_3 ) &= \lambda^2\ \wil{e}_3 \\
t_{\lambda,\mu} ( \wil{e}_4 ) &= \frac{\lambda}{\mu}\ \wil{e}_4 \\
t_{\lambda,\mu} ( \wil{e}_5 ) &= \frac{\mu}{\lambda}\ \wil{e}_5 \\
t_{\lambda,\mu} ( \wil{e}_6 ) &= \frac{1}{\lambda \mu}\ \wil{e}_6 \\
t_{\lambda,\mu} ( \wil{e}_7 ) &= \lambda \mu\ \wil{e}_7. \\
\end{align*}
Let us denote by $w_{ijk}(\lambda,\mu)$ the weight (the eigenvalue) of the action $t_{\lambda,\mu} \cdot \wil{e}_{ijk}$.

The action of $t_{\lambda,\mu}$ on a Pl\"ucker coordinate $\wil{p}_{ijk}$ is given by
$$
t_{\lambda,\mu} \cdot \wil{p}_{ijk} (x) = \wil{p}_{ijk} ( t_{\lambda^{-1},\mu^{-1}}\cdot x)= w_{ijk}(\lambda^{-1},\mu^{-1}) \wil{p}_{ijk},
$$ 
and therefore, its action on a local Pl\"ucker coordinate function $\wil{q}_{rst}$ on $\wil{U}_{ijk}$ is given by 
$$
t_{\lambda,\mu} \cdot \wil{q}_{rst} (x) = \frac{ \wil{p}_{rst} ( t_{\lambda^{-1},\mu^{-1}}\cdot x) }{ \wil{p}_{ijk} ( t_{\lambda^{-1},\mu^{-1}}\cdot x) }
= \frac{ w_{rst} ( \lambda^{-1},\mu^{-1}) }{ w_{ijk}(\lambda^{-1},\mu^{-1}) } \wil{q}_{rst}.
$$
Consequently, if $v = \sum_{r,s,t} a_{rst} \frac{\partial}{\partial \wil{q}_{rst}}$ is a tangent vector at $\wil{e}_{ijk}\in \wil{U}_{ijk}$, then 
the action of the one-parameter subgroup $\gamma(\lambda) = t_{\lambda^{10},\lambda}$, $\lambda\in \C^*$ on $v$ is given by 
\begin{align}
\gamma \cdot v &= \sum_{r,s,t} \lim_{\lambda \to 1} \left(  \frac{ w_{rst}(\lambda^{10},\lambda)}{ w_{ijk}(\lambda^{10},\lambda)} \right) 
\frac{\partial}{\partial \wil{q}_{rst}}.
\end{align}
For example, the action of $\gamma$ on the basis vectors (\ref{A:basis vectors for 123}), which we denote by $v_1,\dots, v_8$ in the written order, 
is given by 
\begin{align*}
\gamma \cdot v_1 &= 11 v_1 \\
\gamma \cdot v_2 &= -11 v_2 \\
\gamma \cdot v_3 &= -9 v_3 \\
\gamma \cdot v_4 &= 9 v_4 \\
\gamma \cdot v_5 &= 31 v_5 \\
\gamma \cdot v_6 &= 29 v_6 \\
\gamma \cdot v_7 &= -31 v_7 \\
\gamma \cdot v_8 &= -29 v_8.
\end{align*}

\section{Bia\l ynicki-Birula Decomposition}

Let $X$ be a smooth projective variety over $\C$ on which an algebraic torus $T$ acts with finitely many fixed points.
Let $T'$ be a 1 dimensional subtorus with $X^{T'} = X^T$. For $p\in X^{T'}$, define the sets 
$$
C_p^+ = \{y \in X:\ {\lim_{s \to 0} s \cdot y = p,}\ s \in T' \}
$$
and 
$$
C_p^- = \{y \in X:\ {\lim_{s \to \infty} s \cdot y = p,}\ s \in T' \},
$$ 
called the plus and minus cells of $p$, respectively.

The following result is customarily called the Bia{\l}ynicki-Birula decomposition theorem in the literature. 
%We abbreviate it to the ``$BB$-decomposition''.

%\newpage

\begin{Theorem}[\cite{BB}]\label{T:BB}
If $X,T$ and $T'$ are as in the above paragraph, then  
\begin{enumerate}
\item both of the sets $C_p^+$ and $C_p^-$ are locally closed subvarieties in $X$, furthermore they are isomorphic to an affine space;
\item if $T_p X$ is the tangent space of $X$ at $p$, then $C_p^+$ (resp., $C_p^-$) is $T'$-equivariantly 
isomorphic to the subspace $T_p^+ X$ (resp., $T_p^- X$) of $T_p X$ spanned by the positive (resp., negative) 
weight spaces of the action of $T'$ on $T_p X$.
\end{enumerate}
\end{Theorem}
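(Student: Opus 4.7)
The plan is to construct, for each $T'$-fixed point $p$, an explicit $T'$-equivariant isomorphism $C_p^+ \simeq T_p^+X$; this will simultaneously establish that $C_p^+$ is locally closed in $X$ and isomorphic to an affine space, proving (1) and (2) at once. The case of $C_p^-$ is symmetric upon replacing $s$ by $s^{-1}$.

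First I would verify that the defining limits exist at all. Since $X$ is complete, the orbit morphism $T' \to X$, $s \mapsto s \cdot y$, extends uniquely to a morphism $\PP^1 \to X$ by the valuative criterion of properness, and the images of $0$ and $\infty$ are necessarily $T'$-fixed; this yields the set-theoretic decompositions $X = \bigsqcup_p C_p^+ = \bigsqcup_p C_p^-$. Next, by Sumihiro's theorem on torus actions, $p$ admits a $T'$-stable open affine neighborhood $U = \mt{Spec}(A)$, and reductivity of $T'$ gives a weight decomposition $A = \bigoplus_{n \in \Z} A_n$. Because $p$ is smooth in $X$ and isolated in $X^{T'} = X^T$, the tangent space has no zero-weight part, so $T_pX = T_p^+X \oplus T_p^-X$. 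I would then choose weight homogeneous lifts $x_1, \dots, x_d \in \mf{m}_p$ of a basis of $\mf{m}_p/\mf{m}_p^2$; these assemble into a $T'$-equivariant morphism $\phi : U \to T_pX$ whose differential at $p$ is an isomorphism. After shrinking $U$ I may assume $\phi$ is étale on all of $U$.

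On the linear model $T_pX$ the attracting cell of the origin is evidently the positive weight subspace $T_p^+X$, an affine space of the predicted dimension. I would then argue that $C_p^+ \cap U = \phi^{-1}(T_p^+X)$ set-theoretically: the inclusion $\subseteq$ is immediate from $T'$-equivariance of $\phi$, while the reverse uses that étale maps preserve the asymptotic behavior of one-parameter orbits. Moreover, for any $y \in C_p^+$ the orbit closure of $y$ contains $p$, so $s \cdot y \in U$ for $s$ sufficiently close to $0$; by $T'$-invariance of $C_p^+$ this forces $C_p^+ \subseteq U$, so the local picture is the whole picture.

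The main obstacle will be upgrading the restriction $\phi|_{C_p^+} : C_p^+ \to T_p^+X$ from étale to a genuine isomorphism. My approach is to exploit that $T_p^+X$ is an affine space over $\C$ and hence étale simply connected, so every connected étale cover of it is trivial; combined with the fact that the fiber of $\phi|_{C_p^+}$ over $0$ can only consist of $\{p\}$ (by isolation of $p$ in $X^{T'}$ together with the local injectivity of $\phi$ near $p$), the cover must be of degree one, and is therefore an isomorphism. Since the construction is manifestly $T'$-equivariant, this yields the identification $C_p^+ \simeq T_p^+X$ required in (2), and taking underlying sets gives the local closedness and affine-space structure asserted in (1).
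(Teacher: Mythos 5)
The paper offers no proof of this statement; it is quoted from Bia{\l}ynicki-Birula's 1973 paper, so your argument has to stand entirely on its own. Your overall route (Sumihiro's theorem, an equivariant \'etale chart $\phi:U\to T_pX$, identification of the attracting set with the positive part of the linear model) is the standard modern one, and several steps are correct: $C_p^+\subseteq U$ for any $T'$-stable open $U\ni p$, the absence of zero weights at an isolated fixed point, the easy inclusion $C_p^+\cap U\subseteq\phi^{-1}(T_p^+X)$, and the identification of the fibre of $\phi$ over $0$ inside $C_p^+$ with $\{p\}$. But the two steps you yourself flag as the crux are exactly where the argument has genuine gaps.

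First, the reverse inclusion $\phi^{-1}(T_p^+X)\subseteq C_p^+$ does not follow from \'etaleness: ``\'etale maps preserve the asymptotic behavior of one-parameter orbits'' is false as stated, because an \'etale map is only a local isomorphism while the orbit $\{s\cdot y\}_{s\to 0}$ is a global object. From $\phi(s\cdot y)=s\cdot\phi(y)\to 0$ you cannot conclude $s\cdot y\to p$: the limit of $s\cdot y$ in $X$ may be a fixed point lying outside $U$, so $\phi^{-1}(T_p^+X)$ may contain extra components on which the flow escapes the chart. The standard repair is to replace the \'etale chart by a \emph{closed} $T'$-equivariant embedding $U\hookrightarrow V$ into a finite-dimensional $T'$-module (possible because $\C[U]$ is a union of finite-dimensional $T'$-submodules); since $U$ is then closed in $V$, a limit that exists in $V^+$ exists in $U$, and one obtains $C_p^+=U\cap V^+$ honestly, together with its scheme structure. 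Second, ``\'etale onto a simply connected target with a single point over $0$, hence an isomorphism'' is not valid: \'etale morphisms need not be finite \'etale covers (the inclusion $\A^1\setminus\{0\}\hookrightarrow\A^1$ is \'etale, has fibres of cardinality at most one, and is not an isomorphism), and simple connectedness only rules out nontrivial \emph{finite} covers. Surjectivity and injectivity must be supplied directly, and both do follow from equivariance: the image is open, $T'$-stable and contains $0$, hence equals $T_p^+X$ under the contracting flow; and two points with equal image flow to $p$, eventually entering a neighbourhood on which $\phi$ is injective. Finally, note that restricting $\phi$ to $C_p^+$ and calling the restriction \'etale presupposes that $C_p^+$ is already a smooth locally closed subvariety, which is part of what is being proved; the subvariety structure has to be produced first (e.g.\ as the zero locus of the positive-weight part of $\mf{m}_p$), after which smoothness at $p$ and the graded-ring argument give the polynomial-algebra, i.e.\ affine-space, structure.
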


As a consequence of the $BB$-decomposition, there exists a filtration
$$
X^{T'}  = V_0 \subset V_1 \subset \cdots \subset V_n = X,\qquad n = \dim X,
$$
of closed subsets such that for each $i=1,\dots,n$, $V_i - V_{i-1}$ is the disjoint union of
the plus (resp.,  minus) cells in $X$ of (complex) dimension $i$. It follows that the odd-dimensional integral 
cohomology groups of $X$ vanish, the even-dimensional integral cohomology groups of $X$ are free, 
and the Poincar\'e polynomial $P_X(t) := \sum_{i=0}^{2n} \dim H^{i}(X; \C) t^i$ of $X$ is given by
$$
P_X(t) = \sum_{p \in X^{T'}} t^{2 \dim C_p^+} = \sum_{p \in X^{T'}} t^{2 \dim C_p^-}.
$$

Now, let $T'$ denote the 1 dimensional subtorus of $T=t_{\lambda,\mu}$ that is given by the image of 
the regular one-parameter subgroup $\gamma(\lambda) = t_{\lambda^{10},\lambda}$, $\lambda \in \C^*$. 
In the rest of this section, we are going to compute the weights of $T'$ on the tangent spaces at the torus fixed points.
We have already made a sample calculation of this sort in Section~\ref{S:sample calculation}.

\begin{enumerate}
\item $p=[\wil{e}_{{246}}]$. An eigenbasis for tangent space at $p$ is given by 
\[
\left\{
-x_{{234}}+x_{{467}}, 
-1/2x_{{124}}+x_{{456}},
x_{{346}},
x_{{245}}+x_{{267}},
x_{{256}},
1/2x_{{124}}+x_{{236}},
x_{{146}},
x_{{126}} 
\right\} 
\]
The weights (in the order of the eigenvectors) are $31,11,40,2,-18,11,20,-9$.
%[-31\,x_{{2,3,4}}+31\,x_{{4,6,7}},
%-11/2\,x_{{1,2,4}}+11\,x_{{4,5,6}},
%40\,x_{{3,4,6}},2\,x_{{2,4,5}}+2\,x_{{2,6,7}},
%-18\,x_{{2,5,6}},
%11/2\,x_{{1,2,4}}+11\,x_{{2,3,6}},20\,x_{{1,4,6}},-9\,x_{{1,2,6}}]

\item $p=[\wil{e}_{{157}}]$. An eigenbasis for tangent space at $p$ is given by
\[
\left\{
x_{{125}},
-1/2\,x_{{127}}+x_{{567}},
1/2\,x_{{135}}+x_{{457}},
x_{{357}},
x_{{257}},
x_{{167}},
x_{{145}},
x_{{137}}
\right\}
\]
The weights (in the order of the eigenvectors) are $-31,-11,9,20,-20,-2,-2,29$.
%[-31\,x_{{1,2,5}},11/2\,x_{{1,2,7}}-11\,x_{{5,6,7}},
%9/2\,x_{{1,3,5}}+9\,x_{{4,5,7}},20\,x_{{3,5,7}},
%-20\,x_{{2,5,7}},-2\,x_{{1,6,7}},
%-2\,x_{{1,4,5}},29\,x_{{1,3,7}}]

\item $p=[\wil{e}_{{256}}]$. An eigenbasis for tangent space at $p$ is given by
\[
\{
-x_{{235}}+x_{{567}},
x_{{236}}+x_{{456}},
1/2\,x_{{156}}+e_{{267}},
x_{{257}},
x_{{246}},
-1/2\,x_{{156}}+x_{{245}},
x_{{126}},
x_{{125}}
\}
\]
The weights (in the order of the eigenvectors) are $31,29,10,22,18,10,9,11$.
%[-31\,x_{{2,3,5}}+31\,x_{{5,6,7}},
%29\,x_{{2,3,6}}+29\,x_{{4,5,6}},
%10\,x_{{1,5,6}}+20\,x_{{2,6,7}},22\,x_{{2,5,7}},18\,x_{{2,4,6}},
%-10\,x_{{1,5,6}}+20\,x_{{2,4,5}},9\,x_{{1,2,6}},11\,x_{{1,2,5}}]

\item $p=[\wil{e}_{{126}}]$. An eigenbasis for tangent space at $p$ is given by
\[
\{
1/2\,x_{{156}}+x_{{267}},
x_{{256}},x_{{246}},
1/2\,x_{{124}}+x_{{236}},
x_{{167}},
x_{{146}},
x_{{125}},
x_{{123}}
\}
\]
The weights (in the order of the eigenvectors) are $11,-9,9,10,31,29,2,31$.
%[11/2\,x_{{1,5,6}}+11\,x_{{2,6,7}},-9\,x_{{2,5,6}},9\,x_{{2,4,6}},10\,
%x_{{1,2,4}}+20\,x_{{2,3,6}},31\,x_{{1,6,7}},29\,x_{{1,4,6}},2\,x_{{1,2
%,5}},31\,x_{{1,2,3}}]

\item $p=[\wil{e}_{{167}}]$. An eigenbasis for tangent space at $p$ is given by
\[
\{
-1/2\,x_{{127}}+x_{{567}},
1/2\,x_{{136}}+x_{{467}},
-1/2\,x_{{147}}+x_{{367}},
1/2\,x_{{156}}+x_{{267}},
x_{{157}},
x_{{146}},
x_{{137}},
x_{{126}}
\}
\]
The weights (in the order of the eigenvectors) are $-9,9,10,-10,2,-2,31,-31$.
%[9/2\,x_{{1,2,7}}-9\,x_{{5,6,7}},9/2\,x_{{1,3,6}}+9\,x_{{4,6,7}},-10\,
%x_{{1,4,7}}+20\,x_{{3,6,7}},-10\,x_{{1,5,6}}-20\,x_{{2,6,7}},2\,x_{{1,
%5,7}},-2\,x_{{1,4,6}},31\,x_{{1,3,7}},-31\,x_{{1,2,6}}]

\item $p=[\wil{e}_{{145}}]$. An eigenbasis for tangent space at $p$ is given by
\[
\{
1/2\,x_{{135}}+x_{{457}},
-1/2\,x_{{124}}+x_{{456}},
1/2\,x_{{147}}+x_{{345}},
-1/2\,x_{{156}}+x_{{245}},
x_{{157}},
x_{{146}},
x_{{134}},
x_{{125}}
\}
\]
The weights (in the order of the eigenvectors) are $11,-11,10,-10,2,-2,29,-29$.
%[11/2\,x_{{1,3,5}}+11\,x_{{4,5,7}},11/2\,x_{{1,2,4}}-11\,x_{{4,5,6}},
%10\,x_{{1,4,7}}+20\,x_{{3,4,5}},10\,x_{{1,5,6}}-20\,x_{{2,4,5}},2\,e_{
%{1,5,7}},-2\,x_{{1,4,6}},29\,x_{{1,3,4}},-29\,x_{{1,2,5}}]

\item $p=[\wil{e}_{{123}}]$. An eigenbasis for tangent space at $p$ is given by
\[
\{
-1/2\,x_{{135}}+x_{{237}},
1/2\,x_{{124}}+x_{{236}},
-1/2\,x_{{127}}+x_{{235}},
1/2\,x_{{136}}+x_{{234}},
x_{{137}},
x_{{134}},
x_{{126}},
x_{{125}}
\}
\]
The weights (in the order of the eigenvectors) are $11,-11,-9,9,31,29,-31,-29$.
%[-11/2\,x_{{1,3,5}}+11\,x_{{2,3,7}},-11/2\,x_{{1,2,4}}-11\,x_{{2,3,6}}
%,9/2\,x_{{1,2,7}}-9\,x_{{2,3,5}},9/2\,x_{{1,3,6}}+9\,x_{{2,3,4}},31\,e
%_{{1,3,7}},29\,x_{{1,3,4}},-31\,x_{{1,2,6}},-29\,x_{{1,2,5}}]

\item $p=[\wil{e}_{{137}}]$. An eigenbasis for tangent space at $p$ is given by
\[
\{
-1/2\,x_{{147}}+x_{{367}},
x_{{357}},x_{{347}},
-1/2\,x_{{135}}+x_{{237}},
x_{{167}},
x_{{157}},
x_{{134}},
x_{{123}}
\}
\]
The weights (in the order of the eigenvectors) are $-11,-9,9,-10,-31,-29,-2,-31$.
%[11/2\,x_{{1,4,7}}-11\,x_{{3,6,7}},-9\,x_{{3,5,7}},9\,x_{{3,4,7}},10\,
%x_{{1,3,5}}-20\,x_{{2,3,7}},-31\,x_{{1,6,7}},-29\,x_{{1,5,7}},-2\,x_{{
%1,3,4}},-31\,x_{{1,2,3}}]

\item $p=[\wil{e}_{{125}}]$. An eigenbasis for tangent space at $p$ is given by
\[
\{
x_{{257}},
x_{{256}},
-1/2\,x_{{156}}+x_{{245}},
-1/2\,x_{{127}}+x_{{235}},
x_{{157}},
x_{{145}},
x_{{126}},
x_{{123}}
\}
\]
The weights (in the order of the eigenvectors) are $11,-11,9,10,31,29,-2,29$.
%[11\,x_{{2,5,7}},-11\,x_{{2,5,6}},-9/2\,x_{{1,5,6}}+9\,x_{{2,4,5}},-10
%\,x_{{1,2,7}}+20\,x_{{2,3,5}},31\,x_{{1,5,7}},29\,x_{{1,4,5}},-2\,x_{{
%1,2,6}},29\,x_{{1,2,3}}]

\item $p=[\wil{e}_{{257}}]$. An eigenbasis for tangent space at $p$ is given by
\[
\{
x_{{157}},x_{{125}},
-1/2\,x_{{127}}+x_{{567}},
x_{{237}}+x_{{457}},
x_{{357}},
x_{{245}}+x_{{267}},
x_{{256}},
-1/2\,x_{{127}}+x_{{235}}
\}
\]
The weights (in the order of the eigenvectors) are $20,-11,9,29,40,-2,-22,9$.

%[20\,x_{{1,5,7}},-11\,x_{{1,2,5}},-9/2\,x_{{1,2,7}}+9\,x_{{5,6,7}},29
%\,x_{{2,3,7}}+29\,x_{{4,5,7}},40\,x_{{3,5,7}},-2\,x_{{2,4,5}}-2\,x_{{2
%,6,7}},-22\,x_{{2,5,6}},-9/2\,x_{{1,2,7}}+9\,x_{{2,3,5}}]

\item $p=[\wil{e}_{{357}}]$. An eigenbasis for tangent space at $p$ is given by
\[
\{
x_{{137}},
-x_{{235}}+x_{{567}},
1/2\,x_{{135}}+x_{{457}},
x_{{345}}+x_{{367}},
x_{{347}},
x_{{257}},
-1/2\,x_{{135}}+x_{{237}},
x_{{157}}
\}
\]
The weights (in the order of the eigenvectors) are $9,31,-31,-11,-2,18,-40,-11,-20$.

%[9\,x_{{1,3,7}},31\,x_{{2,3,5}}-31\,x_{{5,6,7}},-11/2\,x_{{1,3,5}}-11
%\,x_{{4,5,7}},-2\,x_{{3,4,5}}-2\,x_{{3,6,7}},18\,x_{{3,4,7}},-40\,x_{{
%2,5,7}},11/2\,x_{{1,3,5}}-11\,x_{{2,3,7}},-20\,x_{{1,5,7}}]

\item $p=[\wil{e}_{{146}}]$. An eigenbasis for tangent space at $p$ is given by
\[
\{
x_{{346}},
x_{{246}},
x_{{167}},
x_{{145}},
x_{{134}},
x_{{126}},
1/2\,x_{{136}}+x_{{467}},
-1/2\,x_{{124}}+x_{{456}}
\}
\]
The weights (in the order of the eigenvectors) are $20,-20,2,2,31,-29,11,-9$.

%[20\,x_{{3,4,6}},-20\,x_{{2,4,6}},2\,x_{{1,6,7}},2\,x_{{1,4,5}},31\,e_
%{{1,3,4}},-29\,x_{{1,2,6}},11/2\,x_{{1,3,6}}+11\,x_{{4,6,7}},9/2\,x_{{
%1,2,4}}-9\,x_{{4,5,6}}]

\item $p=[\wil{e}_{{347}}]$. An eigenbasis for tangent space at $p$ is given by
\[
\{
-x_{{234}}+x_{{467}},
x_{{237}}+x_{{457}},
-1/2\,x_{{147}}+x_{{367}},
x_{{357}},
x_{{346}},
1/2\,x_{{147}}+x_{{345}},
x_{{137}},
x_{{134}}
\}
\]
The weights (in the order of the eigenvectors) are $-31,-29,-10,-18,-22,-10,-9,-11$

%[31\,x_{{2,3,4}}-31\,x_{{4,6,7}},
%-29\,x_{{2,3,7}}-29\,x_{{4,5,7}},
%10\,x_{{1,4,7}}-20\,x_{{3,6,7}},
%-18\,x_{{3,5,7}},-22\,x_{{3,4,6}},
%-10\,x_{{1,4,7}}-20\,x_{{3,4,5}},
%-9\,x_{{1,3,7}},-11\,x_{{1,3,4}}]

\item $p=[\wil{e}_{{134}}]$. An eigenbasis for tangent space at $p$ is given by
\[
\{
x_{{146}},
x_{{145}},
x_{{137}},
x_{{123}},
x_{{347}},
x_{{346}},
1/2\,x_{{147}}+x_{{345}},
1/2\,x_{{136}}+x_{{234}}
\}
\]
The weights (in the order of the eigenvectors) are $-31,-29,2,-29,11,-11,-9,-10$.
%[-31\,x_{{1,4,6}},
%-29\,x_{{1,4,5}},
%2\,x_{{1,3,7}},
%-29\,x_{{1,2,3}},
%11\,x_{{3,4,7}},
%-11\,x_{{3,4,6}},
%-9/2\,x_{{1,4,7}}-9\,x_{{3,4,5}},
%-10\,x_{{1,3,6}}-20\,x_{{2,3,4}}]

\item $p=[\wil{e}_{{346}}]$. An eigenbasis for tangent space at $p$ is given by
\[
\{
1/2\,x_{{136}}+x_{{234}},
x_{{146}},
x_{{134}},
1/2\,x_{{136}}+x_{{467}},
x_{{236}}+x_{{456}},
x_{{345}}+x_{{367}},
x_{{347}},
x_{{246}}
\}
\]
The weights (in the order of the eigenvectors) are $-9,-20,11,-9,-29,2,22,-40$.

%[-9/2\,x_{{1,3,6}}-9\,x_{{2,3,4}},-20\,x_{{1,4,6}},11\,x_{{1,3,4}},-9/2\,x_{{1,3,6}}-9\,x_{{4,6,7}},
%-29\,x_{{2,3,6}}-29\,x_{{4,5,6}},2\,x_{{3,4,5}}+2\,x_{{3,6,7}},22\,x_{{3,4,7}},-40\,x_{{2,4,6}}]

\end{enumerate}

\begin{Theorem}\label{T:Poincare polynomial}
The Poincar\'e polynomial of $X_{min}$ is 
$$
P_X(t^{1/2}) = 1+ t + 2t^2 + 2t^3 + 3t^4 + 2t^5 + 2 t^6+ t^7 + t^8.
$$
\end{Theorem}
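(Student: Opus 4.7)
The plan is to apply the Białynicki-Birula theorem (Theorem~\ref{T:BB}) to the smooth projective variety $X_{min}$ (smoothness from Theorem~\ref{T:smooth}, projectivity as a closed subvariety of $\mt{Gr}(3,\II)$). By Theorem~\ref{T:torus fixed points}, the maximal torus $T = t_{\lambda,\mu}$ acts on $X_{min}$ with exactly $15$ isolated fixed points, and as noted in Section~\ref{S:sample calculation} the one-parameter subgroup $\gamma(s) = t_{s^{10},s}$ is regular, so $X_{min}^{\gamma} = X_{min}^{T}$. Theorem~\ref{T:BB} then gives
\[
P_{X_{min}}(t) \;=\; \sum_{p \in X_{min}^T} t^{2 \dim C_p^+},
\]
where $\dim C_p^+$ equals the number of \emph{positive} $\gamma$-weights on the tangent space $T_p X_{min}$. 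Thus the theorem reduces to a tabulation over the $15$ fixed points.

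Next I would carry out the weight computation at each of the $15$ fixed points, following verbatim the procedure illustrated at $[\wil{e}_{123}]$ in Section~\ref{S:sample calculation}: pass to the affine chart $\wil{U}_{ijk}$ centered at $p = [\wil{e}_{ijk}]$, write the seven linearized defining forms $\wil{f}_1,\dots,\wil{f}_7$ in local Plücker coordinates $\wil{q}_{rst}$, and compute the kernel of the Jacobian at the origin to obtain an eigenbasis for $T_p X_{min}$. The action of $\gamma$ on each basis vector is then read off from the formula
\[
\gamma \cdot \wil{q}_{rst} \;=\; \frac{w_{rst}(s^{-10},s^{-1})}{w_{ijk}(s^{-10},s^{-1})}\, \wil{q}_{rst},
\]
where $w_{rst}(\lambda,\mu)$ is the $t_{\lambda,\mu}$-weight of $\wil{e}_{rst}$. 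The outputs of these calculations are the $15$ tables of weights listed immediately before the theorem's statement.

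Finally I would count the number of positive entries in each weight-list to obtain $\dim C_p^+$, producing the multiset of plus-cell dimensions. Organizing the $15$ fixed points by $\dim C_p^+$ yields one cell of each dimension in $\{0,1,7,8\}$, two cells of each dimension in $\{2,3,5,6\}$, and three cells of dimension $4$. Summing $t^{2\dim C_p^+}$ and substituting $t \mapsto t^{1/2}$ gives
\[
P_{X_{min}}(t^{1/2}) \;=\; 1 + t + 2t^2 + 2t^3 + 3t^4 + 2t^5 + 2t^6 + t^7 + t^8,
\]
as claimed. As a sanity check, the coefficients sum to $15 = |X_{min}^T|$, and Poincaré duality (available since $X_{min}$ is smooth projective of complex dimension $8$) is manifestly respected by the palindromic coefficient pattern.

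The main obstacle is purely mechanical: correctly executing all fifteen Jacobian-kernel computations and the subsequent weight extraction, without miscounting signs or omitting eigenvectors. Since each computation is isomorphic to the one performed in Section~\ref{S:sample calculation} and is amenable to symbolic computation in Maple, the risk is bookkeeping rather than conceptual; the tables already recorded above can then be imported directly into the proof.
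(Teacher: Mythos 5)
Your proposal is correct and follows essentially the same route as the paper: the authors likewise invoke the Bia{\l}ynicki--Birula decomposition for the regular one-parameter subgroup $\gamma(s)=t_{s^{10},s}$ and read off $\dim C_p^+$ as the number of positive weights in each of the fifteen tangent-space tables, obtaining the same cell-dimension multiset $\{0,1,2,2,3,3,4,4,4,5,5,6,6,7,8\}$. The only difference is one of exposition: the paper's proof is a one-line pointer to the preceding computations, whereas you make the positive-weight count and the resulting coefficient tally explicit.
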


\begin{proof}
The proof follows from the discussion at the beginning of this section and the computations made above.  
\end{proof}

\begin{Corollary}\label{C:ours}
The Picard number of $X_{min}$ is 1. 
\end{Corollary}
\begin{proof}
For a nonsingular projective variety $X$ over $\C$, the Picard number $\rho(X)$ of $X$ 
satisfies $1\leq \rho(X) \leq b_2$, where $b_2$ is the second Betti number of $X$. 
In the light of this fact, the proof follows from Theorem~\ref{T:Poincare polynomial}.
\end{proof}

\begin{Remark}\label{R:Ruzzi}
In~\cite[Theorem 2]{Ruzzi10}, Ruzzi showed that there exists unique 
smooth equivariant completion of $\mt{G}_2/\mt{SO}_4$ with Picard number 1. 
It follows from our Corollary~\ref{C:ours} that $X_{min}$ is the completion that 
Ruzzi found.
\end{Remark}

We finish our paper with a general remark. 

\begin{Remark}
The theory of equivariant embeddings of symmetric varieties is a very active and 
fascinating branch of algebraic geometry (see~\cite{Brion12,Timashev}). 
There are many $\mt{G}_2$-equivariant compactifications of $\mt{G}_2/\mt{SO}_4$.
For example, there is the well known ``wonderful compactification'' due to 
DeConcini and Procesi,~\cite{DP83}. The basic invariants of this compactification 
are determined in~\cite{DS85}. More general than the wonderful compactification are 
the ``special embeddings'' of symmetric varieties (see~\cite{CDM} and~\cite{Vust90}).
For of $\mt{G}_2/\mt{SO}_4$, there are many special embeddings whose 
posets of $\mt{G}_2$-orbits are isomorphic to that of $X_{min}$. 
However, these special embeddings (except the wonderful compactification) 
of $\mt{G}_2/\mt{SO}_4$ are not smooth.
\end{Remark}

\end{document}